\theoremstyle{remark}
\theoremstyle{plain}
\newtheorem{theorem}{Theorem}[section]
\newtheorem{lemma}[theorem]{Lemma}
\newtheorem{proposition}[theorem]{Proposition}
\newtheorem{definition}{Definition}[section]
\newtheorem{example}{Example}[section]
\newtheorem{corollary}[theorem]{Corollary}
\newtheorem{problem}[theorem]{Problem}
\newcommand{\col}[1]{{\bf #1}}
\newcommand{\fix}   {\ensuremath{\mathrm{fix}}}
\newcommand{\expn}{\mathrm{exp}_{n}}
\begin{document}
\title{Enumeration of derangements with descents in prescribed positions}
\author{Niklas Eriksen, Ragnar Freij, Johan Wästlund\\ 
\small Department of Mathematical Sciences,\\[-0.8ex]
\small Chalmers University of Technology and University of Gothenburg,\\[-0.8ex] \small S-412 96 Göteborg, Sweden\\
\small \texttt{ner@chalmers.se  ragnar.freij@chalmers.se  wastlund@chalmers.se}}
\date{\small \today\\ 
\small Mathematics Subject Classification: Primary: 05A05, 05A15.}

\maketitle

\begin{abstract}
We enumerate derangements with descents in prescribed positions. A generating function was given by Guo-Niu Han and Guoce Xin in 2007. We give a combinatorial proof of this result, and derive several explicit formulas. To this end, we
consider fixed point $\lambda$-coloured permutations, which are easily
enumerated. Several formulae regarding these numbers are given, as
well as a generalisation of Euler's difference tables. We also prove that except in a trivial special case, if a permutation $\pi$ is chosen uniformly among all permutations on $n$ elements, the events that $\pi$ has descents in a set $S$ of positions, and that $\pi$ is a derangement, are positively correlated.
\end{abstract}

In a permutation $\pi \in \Sn$, a \emph{descent} is a position $i$
such that $\pi_i > \pi_{i+1}$, and an \emph{ascent} is a position
where $\pi_i < \pi_{i+1}$. A \emph{fixed point} is a position $i$
where $\pi_i = i$. If $\pi_i > i$, then $i$ is called an
\emph{excedance}, while if $\pi_i < i$, $i$ is a
\emph{deficiency}. Richard Stanley \cite{Sta2006} conjectured that
permutations in $\SymGroup{2n}$ with descents at and only at odd
positions (commonly known as \emph{alternating permutations}) and $n$
fixed points are equinumerous with permutations in $\Sn$ without fixed
points, commonly known as \emph{derangements}.   

The conjecture was given a bijective proof by Chapman and Williams in
2007 \cite{ChaWil2007}. The solution is quite 
straightforward: If $\pi \in \SymGroup{2n}$ and $F \subseteq [2n]$ is
the set of fixed points, then removing the fixed points gives a
permutation $\tau$ in $\SymGroup{[2n]\setminus F}$ without fixed
points, and $\pi$ can be easily reconstructed from $\tau$.   

For instance, removing the fixed points in $\pi = 326451$ gives $\tau
= 361$ or $\tau = 231$ if we reduce it to $\SymGroup{3}$. To recover
$\pi$, we note that the fixed points in the first two descents must be
at the respective second positions, $2$ and $4$, since both $\tau_1$
and $\tau_2$ are excedances, that is above the fixed point diagonal
$\tau_i = i$. On the other hand, since $\tau_3 < 3$, the fixed point
in the third descent comes in its first position, $5$. With this
information, we immediately recover $\pi$. 

Alternating permutations are permutations which fall in and only in blocks of length two. A natural generalisation comes by considering permutations which fall in blocks of lengths $a_1, a_2, \ldots, a_k$ and have $k$ fixed points (this is obviously the maximum number of fixed points, since each descending block can have at most one). These permutations are in bijection with derangements which descend in blocks of length $a_1-1, a_2-1, \ldots, a_k-1$, and possibly also between them, a fact which was proved by Guo-Niu Han and Guoce Xin \cite{HanXinUnpub}.

In this article we compute the number of derangements which have descents in prescribed blocks and possibly also between them. A generating function was given by Han and Xin using a representation theory argument. We start by computing the generating function using simple combinatorial arguments (Section~\ref{sc:genfkn}), and then proceed to extract a closed formula in Section~\ref{sc:clofor}. 

Interestingly, this formula, which is a combination of factorials, can also be written as the same combination of an infinite family of other numbers, including the derangement numbers. We give a combinatorial interpretation of these families as the number of \emph{fixed point $\lambda$-coloured} permutations. 

For a uniformly chosen permutation, the events that it is a derangement and that its descent set is included in a given set are not independent. We prove that except for the permutations of odd length with no ascents, these events are positively
correlated. In fact, we prove that the number of permutations which are fixed point free when sorted decreasingly in each block is larger when there are few and large blocks, compared to many small blocks. The precise statement is found in Section \ref{sec:Correlation}. 

Finally, in Section \ref{sec: Euler}, we generalise some results concerning Euler's difference triangles from \cite{Rak2007} to fixed point $\lambda$-coloured permutations, using a new combinatorial interpretation. This interpretation is in line with the rest of this article, counting permutations having an initial descending segment and $\lambda$-coloured fixed points to the right of the initial segment. In addition, we also derive a relation between difference triangles with different values of $\lambda$. 

There are many papers devoted to counting permutations with prescribed descent sets and fixed points, see for instance
\cite{DesWac1993,GesReu1993} and references therein. More recent related papers include \cite{CorGesSavWil2007}, where Corteel et al.~considered the distribution of descents and major index over permutations without descents on the last $i$ positions, and \cite{Cho2008}, where Chow considers the problem of enumerating the involutions with prescribed descent set.

\section{Definitions and examples}
Let $[i, j] = \{i, i+1, \ldots, j\}$ and $[n] = [1, n]$. We think of $[n]$ as being decomposed into blocks of lengths
$a_1,\dots, a_k$, and we will consider permutations that decrease within these blocks. The permutations are allowed to decrease or increase in the breaks between the blocks. 

Consider a sequence $\vc{a}=(a_1, a_2, \dots, a_k)$ of nonnegative integers, with $\sum_i a_i=n$, and let $c_j=\sum_{i=1}^j
a_i$. We denote by $A_j$ the $j$:th block of $\vc a$, that is the set $A_j = [c_{j-1}+1, c_j] \subseteq [n]$.

Throughout the paper, $k$ will denote the number of blocks in a given composition. We let $\SymGroup{\vc{a}}\subseteq \Sn$ be the set of permutations that have descents at every place within the blocks, and may or may not have descents in the breaks between the blocks. In particular $\Sn = \SymGroup{(1,1,\dots,1)}$.

\begin{example} If $n=6$ and $\vc{a}=(4,2)$, then we consider permutations that are decreasing in positions 1--4 and in positions 5--6. Such a permutation is uniquely determined by the partition of the numbers 1--6 into these blocks, so the total number of such permutations is $$\binom{6}{4, 2} = 15.$$ Of these 15 permutations, those that are derangements are \begin{eqnarray} \notag 6543|21\\  \notag 6542|31\\ \notag 6541|32\\ \notag 6521|43\\ \notag 5421|63\\ \notag 5321|64\\ \notag 4321|65 \end{eqnarray}     
\end{example}

We define $D(\vc{a})$ to be the subset of $\SymGroup{\vc{a}}$ consisting of derangements, and our objective is to
enumerate this set. For simplicity, we also define $D_n=D(1,\dots,1)$. 

For every composition $\vc{a}$ of $n$, there is a natural map $\Phi_{\vc{a}}:\Sn\rightarrow S_{\vc{a}}$, given by simply sorting the entries in each block in decreasing order. For example, if $\sigma=25134$, we have $\Phi_{(3,2)}(\sigma)=52143$. Clearly each fiber of this map has $a_1!\ldots a_k!$ elements. 

The following maps on permutations will be used frequently in the paper.

\begin{definition}
For $\sigma \in \Sn$, let $\phi_{j,k}(\sigma) = \tau_1 \ldots 
\tau_{j-1} k \tau_j \ldots \tau_n$, where 
\[
\tau_i=\left\{ \begin{array}{ll}
\sigma_i & \textrm{if $\sigma_i < k$}\\
\sigma_{i}+1 & \textrm{if $\sigma_i \geq k$}
\end{array}\right.
\]
Similarly, let $\psi_j(\sigma)=\tau_1\ldots \tau_{j-1}  \tau_{j+1} \ldots \tau_n$ where
\[
\tau_i=\left\{ \begin{array}{ll}
\sigma_i & \textrm{if $\sigma_i < \sigma_j$;}\\
\sigma_{i}-1 & \textrm{if $\sigma_i > \sigma_j$.}
\end{array}\right. 
\]
\end{definition}
Thus, $\phi_{j, k}$ inserts the element $k$ at position $j$, increasing elements larger than $k$ by one and shifting elements to the right of position $j$ one step further to the right. The map $\psi_j$ removes the element at position $j$, decreasing larger elements by one and shifting those to its right one step left. 

We will often use the map $\phi_j = \phi_{j, j}$ which inserts a fixed point at position $j$. The generalisations to a set $F$ of fixed points to be inserted or removed are denoted $\phi_F(\sigma)$ and $\psi_F(\sigma)$, inserting elements in increasing order and removing them in decreasing order.

The maps $\phi$ and $\psi$ are perhaps most obvious in terms of permutation matrices. For a permutation $\sigma\in
\Sn$, we get $\phi_{j,k}(\sigma)$ by adding a new row below the $k$:th one, a new column before the $j$:th one, and an entry 
at their intersection. Similarly, $\psi_j(\sigma)$ is obtained by deleting the $j$:th column and the $\sigma_j$:th row. 

\begin{example} 

We illustrate by showing some permutation matrices. For $\pi=21$ and
$F=\{1,3\}$, we get

\begin{picture}(250,60)(-70, 0)
\multiput(0,10)(10,0){3}{\line(0,1){20}}
\multiput(0,10)(0,10){3}{\line(1,0){20}}
\put(5,25){\circle*{3}}
\put(15,15){\circle*{3}}
\put(0,0){$\pi$}

\multiput(50,10)(10,0){4}{\line(0,1){30}}
\multiput(50,10)(0,10){4}{\line(1,0){30}}
\put(55,35){\circle*{3}}
\put(65,15){\circle*{3}}
\put(75,25){\circle*{3}}
\put(75,25){\circle{5}}
\put(50,0){$\phi_{3,2}(\pi)$}

\multiput(110,10)(10,0){5}{\line(0,1){40}}
\multiput(110,10)(0,10){5}{\line(1,0){40}}
\put(115,15){\circle*{3}}
\put(115,15){\circle{5}}
\put(125,45){\circle*{3}}
\put(135,35){\circle*{3}}
\put(135,35){\circle{5}}
\put(145,25){\circle*{3}}
\put(110,0){$\phi_F(\pi)$}

\multiput(180,10)(10,0){4}{\line(0,1){30}}
\multiput(180,10)(0,10){4}{\line(1,0){30}}
\put(185,15){\circle*{3}}
\put(185,15){\circle{5}}
\put(195,35){\circle*{3}}
\put(205,25){\circle*{3}}
\put(205,25){\circle{5}}
\put(180,0){$\psi_4\circ\phi_F(\pi)$}
\end{picture}

where inserted points are labeled with an extra circle.
\end{example}

\section{A generating function} \label{sc:genfkn}

Guo-Niu Han and Guoce Xin gave a generating function for $D( a)$
(\cite{HanXinUnpub}, Theorem~9). In fact they proved this generating
function for another set of permutations, equinumerous to $D(\vc{a})$
by (\cite{HanXinUnpub}, Theorem~1). What they proved was the following:  

\begin{theorem} \label{thm: HX}
The number $|D(\vc{a})|$ is the coefficient of $x_1^{a_1}\cdots x_k^{a_k}$ in the expansion of 
\[\frac{1}{(1+x_1)\cdots(1+x_k)(1-x_1-\dots-x_k)}.\]
\end{theorem}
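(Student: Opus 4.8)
The plan is to set up a sign-reversing involution that reduces the generating function identity to a direct count of permutations in $S_{\vc a}$, grouped by their set of fixed points. First I would extract the coefficient on the analytic side: writing $\frac{1}{1-x_1-\dots-x_k} = \sum_{m\ge 0}(x_1+\dots+x_k)^m$ and $\frac{1}{1+x_i} = \sum_{b_i\ge 0}(-1)^{b_i}x_i^{b_i}$, the coefficient of $x_1^{a_1}\cdots x_k^{a_k}$ becomes
\[
\sum_{0\le b_i\le a_i}(-1)^{b_1+\dots+b_k}\binom{(a_1-b_1)+\dots+(a_k-b_k)}{a_1-b_1,\dots,a_k-b_k}.
\]
The multinomial coefficient here counts permutations of $S_{\vc{a}\,-\,\vc b}$ — that is, permutations decreasing inside blocks of sizes $a_i-b_i$ — since, as the excerpt notes in its first example, such a permutation is determined by the partition of the ground set into its blocks. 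So the claim is equivalent to
\[
|D(\vc a)| \;=\; \sum_{0\le b_i\le a_i}(-1)^{b_1+\dots+b_k}\,\bigl|S_{\vc a - \vc b}\bigr|.
\]

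Next I would interpret the right-hand side as inclusion–exclusion over fixed points. Given $\sigma\in S_{\vc a}$, let $F$ be its set of fixed points; since each descending block contributes at most one fixed point, $F$ meets each block $A_j$ in at most one element, so $F$ is encoded by a vector $\vc b=(b_1,\dots,b_k)$ with $b_j\in\{0,1\}$ recording which blocks contain a fixed point — but to match the sum above I actually want to range over all sub-multisets, which suggests using the maps $\phi_F,\psi_F$ from the Definition to relate $S_{\vc a}$ with a chosen set of ``marked'' fixed points to $S_{\vc a - \vc b}$. Concretely, I would show that for each $\vc b\le\vc a$ the permutations in $S_{\vc a}$ together with a choice of one marked fixed point in $b_j$ of the positions of block $j$ are in bijection (via removing the marked fixed points with $\psi_F$, which preserves the decreasing-block structure once one checks the removed positions) with pairs consisting of a permutation in $S_{\vc a - \vc b}$ and nothing else — giving that $\sum_{\vc b}(-1)^{|\vc b|}|S_{\vc a-\vc b}|$ equals $\sum_{\sigma\in S_{\vc a}}\sum_{F'\subseteq \operatorname{Fix}(\sigma)}(-1)^{|F'|}$, where $F'$ ranges over subsets of the fixed-point set hitting each block at most once. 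The inner alternating sum is $0$ unless $\operatorname{Fix}(\sigma)=\emptyset$, in which case it is $1$; hence the total is exactly $|D(\vc a)|$.

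The main obstacle I expect is the bookkeeping in the bijection step: I must verify that $\psi_F$ (removing a set $F$ of fixed points, one per block among the marked blocks) sends $S_{\vc a}$ into $S_{\vc a-\vc b}$ and is reversible by $\phi_F$, and in particular that the decreasing condition is neither created nor destroyed at the block boundaries — the subtlety being exactly the ``may or may not descend between blocks'' freedom, which must be shown to be respected under insertion/removal. Care is also needed because the multiset sum ranges $b_j$ up to $a_j$, whereas a single permutation has at most one fixed point per block; the resolution is that the bijection absorbs the ``extra'' multiplicity precisely into the choice of which $b_j$ positions within a block are designated — so the identity is really $\sum_{\vc b\le\vc a}(-1)^{|\vc b|}|S_{\vc a-\vc b}| = \sum_{\sigma\in S_{\vc a}} \prod_j (1-[\text{block }j\text{ has a fixed point in }\sigma])$, and this product vanishes unless $\sigma$ is a derangement. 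Once the boundary-condition check is done, the rest is the standard inclusion–exclusion cancellation.
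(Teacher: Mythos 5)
Your reduction of the theorem to the identity $|D(\vc{a})| = \sum_{\vc{0}\le\vc{b}\le\vc{a}}(-1)^{\sum b_i}\,|\SymGroup{\vc{a}-\vc{b}}|$ is correct, and your observation that removing or inserting a single fixed point per block preserves the descending-block structure is sound (it is essentially Lemma~\ref{lm:MjNj}). The gap is in the inclusion--exclusion step: you want each term $|\SymGroup{\vc{a}-\vc{b}}|$ to count permutations of $\SymGroup{\vc{a}}$ carrying $b_j$ marked fixed points in block $j$, but a descending block contains at most one fixed point, so for any $\vc{b}$ with some $b_j\ge 2$ there is nothing to biject to, while $|\SymGroup{\vc{a}-\vc{b}}|>0$. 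Concretely, for $\vc{a}=(3)$ the term $b_1=2$ contributes $+|\SymGroup{(1)}|=+1$, yet no $\sigma\in\SymGroup{(3)}$ has two fixed points; the identity only holds because such terms cancel among themselves, and that cancellation is precisely the content that must be proved. Your proposed fix --- letting the marks be designated positions rather than fixed points --- changes the count (you would get $|\SymGroup{\vc{a}}|\prod_j\binom{a_j}{b_j}$, not $|\SymGroup{\vc{a}-\vc{b}}|$), and the alternating sum over position subsets does not vanish only on derangements, so the final display in your sketch is not obtained by ``standard inclusion--exclusion''.

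The paper handles exactly this point by eliminating one factor $(1+x_j)$ at a time: the insertion bijection of Lemma~\ref{lm:MjNj} gives $|D_{j-1}(\vc{a})| = |D_j(\vc{a})| + |D_j(a_1,\dots,a_j-1,\dots,a_k)|$, and telescoping this recursion (Theorem~\ref{thm:Genf}) yields the generating function; the terms with $b_j\ge 2$ arise from iterating the recursion within a single shrinking block, not from simultaneous fixed points of one permutation. If you prefer to keep your involution framing, it can be repaired: work on pairs $(\vc{b},\tau)$ with $\tau\in\SymGroup{\vc{a}-\vc{b}}$, locate the first block $j$ in which $\tau$ has a fixed point or $b_j>0$, and toggle --- remove the fixed point (raising $b_j$ by one) if there is one, otherwise insert one at the unique descent-preserving position via $\phi_r$ (lowering $b_j$ by one). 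One checks that removing a fixed point never creates a new one, and that insertion into a block already containing a fixed point is impossible, so this is a well-defined sign-reversing involution whose surviving objects are exactly the pairs $(\vc{0},\tau)$ with $\tau\in D(\vc{a})$. That argument is equivalent to the paper's, but it is genuinely different from, and needed in place of, the term-by-term bijection your sketch describes.
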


The proof uses scalar products of symmetric functions. We give a more direct proof, with a combinatorial flavour. The proof uses the following definition, and the bijective result of Lemma~\ref{lm:MjNj}. 

\begin{definition} \label{def:D_j}
We denote by $D_j(\vc{a})$ the set of permutations in $\SymGroup{\vc{a}}$ that have no fixed points in blocks $A_1,\dots,
A_j$. Thus, $D(\vc{a})=D_k(\vc{a})$. 

Moreover, let $D^*_j(\vc{a})$ be the set of permutations in $\SymGroup{\vc{a}}$ that have no fixed points in the first $j-1$
blocks, but have a fixed point in $A_j$. 
\end{definition}

\begin{lemma} \label{lm:MjNj}
There is a bijection between $D_j(a_1,\dots,a_k)$ and \\$D_j^*(a_1,\dots,a_{j-1},a_j+1,a_{j+1},\dots,a_k)$. 
\end{lemma}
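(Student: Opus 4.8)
The plan is to exhibit an explicit bijection built from the insertion/removal maps $\phi$ and $\psi$ introduced above. Let $\vc a = (a_1,\dots,a_k)$ and write $\vc a' = (a_1,\dots,a_{j-1},a_j+1,a_{j+1},\dots,a_k)$, and let $c_{j-1} = a_1+\dots+a_{j-1}$ so that the $j$-th block of $\vc a$ is $A_j = [c_{j-1}+1, c_{j-1}+a_j]$ and of $\vc a'$ is $A_j' = [c_{j-1}+1, c_{j-1}+a_j+1]$. Given $\sigma \in D_j(\vc a)$, the idea is that $\sigma$ restricted to $A_j$ is a decreasing run with no fixed point, so it lies entirely above or entirely below the diagonal in some pattern; I want to splice in one new fixed point into block $j$ at the unique position that keeps the block decreasing, turning the block into a decreasing run of length $a_j+1$ that now contains exactly one fixed point. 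Concretely, I would define the forward map as follows: the $a_j$ values occupying block $A_j$ in $\sigma$, together with the set of positions $A_j$ itself, determine by the decreasing-order constraint exactly where a fixed point can be inserted; apply the appropriate $\phi_{\ell,\ell}$-type insertion (on the scale of $[n+1]$, after relabelling) to add a fixed point at that position in block $j$, leaving all other blocks' relative content unchanged. This lands in $\SymGroup{\vc a'}$, still has no fixed points in the first $j-1$ blocks (insertion happens in block $j$, and $\phi$ shifts larger values up but cannot create a coincidence $\tau_i = i$ in an earlier block — this needs a short check), and has at least one fixed point in $A_j'$; one argues it has exactly one because a decreasing run can contain at most one fixed point. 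Hence the image lies in $D_j^*(\vc a')$.

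For the inverse, start from $\rho \in D_j^*(\vc a')$. Block $A_j'$ is a decreasing run of length $a_j+1$ containing exactly one fixed point, at some position $p \in A_j'$ with $\rho_p = p$. Apply $\psi_p$ to delete that position and its value, then relabel down to $[n]$; the resulting permutation has blocks of sizes $\vc a$, has no fixed points in the first $j-1$ blocks (deletion of a value and shifting down cannot destroy the derangement property there — again a short check using the definition of $\psi$), and now block $A_j$ has no fixed point, because removing the one diagonal entry from a maximal decreasing run leaves a decreasing run whose entries all lie strictly on one side of the diagonal (this is the crux: one must verify that the "shift" in $\psi$ doesn't accidentally create a new fixed point inside block $j$, which follows from the run being monotone and the deleted entry being the unique crossing point). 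So the image is in $D_j(\vc a)$.

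Then I would check the two composites are identities. Both directions are essentially forced: inserting the fixed point at the only position compatible with the decreasing block, and deleting the unique fixed point of that block, are mutually inverse because $\phi_{\ell,\ell}$ and $\psi_\ell$ are inverse as stated in the paper (modulo the bookkeeping of which position $\ell$ is, which is pinned down by monotonicity in both directions). The relabelling between $[n]$ and $[n+1]$ is routine and I would suppress it or handle it with a one-line remark.

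The main obstacle I anticipate is \emph{not} the high-level bijection but the verification that inserting/deleting a fixed point inside block $j$ has no side effects on the fixed-point structure of the other blocks, and — more delicately — that block $j$ itself behaves correctly: that the set of "positions where a fixed point may be inserted keeping the run decreasing and creating no second fixed point" is a single, canonically determined position, and symmetrically that a maximal decreasing run with exactly one fixed point has that point determined so that $\psi$ removes it cleanly. This amounts to a careful case analysis of a decreasing sequence of distinct integers placed on a consecutive block of positions relative to the diagonal $\tau_i = i$, keeping track of how $\phi$ (which increments values $\ge k$) and $\psi$ (which decrements values $> \sigma_j$) interact with that diagonal. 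Once that monotonicity bookkeeping is nailed down, everything else is a direct check against the definitions.
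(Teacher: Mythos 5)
Your proposal is correct and follows essentially the same route as the paper: the paper also inserts a fixed point into block $j$ via $\phi_r$ at the unique position $r$ where the decreasing block switches from excedances to deficiencies, notes that this lands in $D_j^*(a_1,\dots,a_{j-1},a_j+1,a_{j+1},\dots,a_k)$ without creating fixed points elsewhere, and inverts by $\psi_r$ (deleting the block's unique fixed point). The monotonicity bookkeeping you flag is exactly what the paper dispatches with its ``it is easy to see'' remark, with the crossing point from excedances to deficiencies playing the role of your canonically determined insertion position.
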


\begin{proof}
Let $\sigma=\sigma_1\dots\sigma_n$ be a permutation in
$D_j(a_1,\dots,a_k)$, and consider the block $A_j=\{p ,
p+1,\dots,q\}$. Then there is an index $r$ such that
$\sigma_p\dots\sigma_{r-1}$ are excedances, and
$\sigma_r\dots\sigma_q$ are deficiencies.  

Now $\phi_r(\sigma)$ is a permutation of $[n+1]$. It is easy to see
that $$\phi_r(\sigma) \in
\SymGroup{(a_1,\dots,a_{j-1},a_j+1,a_{j+1},\dots,a_k)}.$$ All the
fixed points of $\sigma$ are shifted one step to the right, and one
new is added in the $j$:th block, so $$\phi(\sigma) \in
D_j^*(a_1,\dots,a_{j-1},a_j+1,a_{j+1},\dots,a_k).$$  
We see that $\psi_r(\phi_r(\sigma)) = \sigma$, so the map $\sigma\mapsto \phi_r(\sigma)$ is a bijection.
\end{proof}

We now obtain a generating function for $|D(\vc{a})|$, with a purely
combinatorial proof. In fact, we even strengthen the result to give
generating functions for $|D_j(\vc{a})|$, $j=0, \dots, k$. Theorem
\ref{thm: HX} then follows by letting $j = k$. 

\begin{theorem} \label{thm:Genf}
The number $|D_j(\vc{a})|$ is the coefficient of $x_1^{a_1}\cdots x_k^{a_k}$ in the expansion of
\begin{equation} \label{eq:GenFj}
\frac{1}{(1+x_1)\cdots(1+x_j)(1-x_1-\dots-x_k)}.
\end{equation}
\end{theorem}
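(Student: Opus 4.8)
The plan is to prove this by induction on $j$, using Lemma~\ref{lm:MjNj} to take the inductive step. For the base case $j=0$, the claim is that $|D_0(\vc a)| = |\SymGroup{\vc a}|$ equals the coefficient of $x_1^{a_1}\cdots x_k^{a_k}$ in $\frac{1}{1-x_1-\cdots-x_k}$. But $|\SymGroup{\vc a}| = \binom{n}{a_1,\dots,a_k}$, the number of ordered set partitions of $[n]$ into blocks of the prescribed sizes (since a permutation in $\SymGroup{\vc a}$ is determined by the unordered content of each block, each block being sorted decreasingly), and this multinomial coefficient is precisely $[x_1^{a_1}\cdots x_k^{a_k}]\,(1-x_1-\cdots-x_k)^{-1}$ by the multinomial theorem. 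So the base case is immediate.

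For the inductive step, suppose the formula holds for $D_{j-1}$ with all compositions; I want to establish it for $D_j$. First I would record the decomposition $D_{j-1}(\vc a) = D_j(\vc a) \sqcup D_j^*(\vc a)$: a permutation with no fixed points in the first $j-1$ blocks either also has no fixed point in $A_j$ (landing in $D_j$) or has one (landing in $D_j^*$). Lemma~\ref{lm:MjNj} gives $|D_j^*(a_1,\dots,a_j+1,\dots,a_k)| = |D_j(a_1,\dots,a_j,\dots,a_k)|$. Combining these, for the composition $\vc a' = (a_1,\dots,a_{j-1}, a_j+1, a_{j+1},\dots,a_k)$ we get
\[
|D_{j-1}(\vc a')| = |D_j(\vc a')| + |D_j(a_1,\dots,a_j,\dots,a_k)|.
\]
Reindexing (write $b = a_j+1$, so the last term has $a_j = b-1$), this is a recurrence expressing $|D_j|$ at block-size $b$ in terms of $|D_{j-1}|$ at block-size $b$ minus $|D_j|$ at block-size $b-1$; telescoping in $b$ gives $|D_j(\vc a)| = \sum_{b=0}^{a_j} (-1)^{a_j - b}\, |D_{j-1}(a_1,\dots,a_{j-1},b,a_{j+1},\dots,a_k)|$, where the alternating sign comes from iterating the recurrence and the sum terminates because $|D_j|$ vanishes when its $j$-th block is empty (an empty block trivially has no fixed point, but then $D_j$ and $D_{j-1}$ coincide; one should check the boundary term carefully).

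The translation to generating functions is then routine: by the inductive hypothesis, $\sum_{\vc a} |D_{j-1}(\vc a)| x_1^{a_1}\cdots x_k^{a_k} = \frac{1}{(1+x_1)\cdots(1+x_{j-1})(1-x_1-\cdots-x_k)}$, and the operation "replace the coefficient sequence in the variable $x_j$ by its alternating partial sums" corresponds exactly to multiplying the generating function by $\frac{1}{1+x_j}$ (since $\frac{1}{1+x_j} = \sum_{m\ge 0}(-1)^m x_j^m$ and multiplying by this geometric series forms precisely those alternating partial sums). This produces the extra factor $(1+x_j)^{-1}$, yielding formula~\eqref{eq:GenFj} for $D_j$.

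The step I expect to be the main obstacle is handling the boundary/reindexing bookkeeping in the telescoping sum cleanly — in particular making precise the claim that the recurrence from Lemma~\ref{lm:MjNj} combined with the disjoint decomposition really does amount to "$x_j$-wise alternating partial sums," and verifying the base of that telescoping (the behavior when the $j$-th block shrinks to size $0$). Once that identification is in hand, the generating-function manipulation is a one-line consequence of the identity $(1+x_j)^{-1} = \sum_m (-1)^m x_j^m$.
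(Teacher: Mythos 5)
Your proposal is correct and follows essentially the same route as the paper: the same base case via the multinomial coefficient, the same disjoint decomposition $D_{j-1}=D_j\sqcup D_j^*$, and the same appeal to Lemma~\ref{lm:MjNj}. The only difference is cosmetic: the paper encodes the resulting recurrence directly as $F_{j-1}(\vc{x})=(1+x_j)F_j(\vc{x})$ and divides, which sidesteps the telescoping/boundary bookkeeping you single out as the delicate step (and note your stated reason for termination is slightly off --- when the $j$-th block is empty, $|D_j|$ does not vanish but coincides with $|D_{j-1}|$, which is exactly what makes the $b=0$ term of your alternating sum come out right).
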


\begin{proof} 
Let $F_j(\vc{x})$ be the generating function for $|D_j(\vc{a})|$, so that $|D_j(a_1,\dots,a_k)|$ is the coefficient for $x_1^{a_1}\cdots x_k^{a_k}$ in $F_j(\vc{x})$. We want to show that $F_j( x)$ is given by (\ref{eq:GenFj}). 

By definition, $|D_0(\vc{a})|=|\SymGroup{\vc{a}}|$. But a permutation in $\SymGroup{\vc{a}}$ is uniquely determined by the set of $a_1$ numbers in the first block, the set of $a_2$ numbers in the second, etc. So $|D_0|$ is the multinomial coefficient $\binom{n}{a_1,a_2,\dots ,a_k}$.  This is also the coefficient of $x_1^{a_1}\cdots x_k^{a_k}$ in the expansion of $1+(\sum x_i)+(\sum x_i)^2+\cdots$, since any such term must come from the $\left(\sum x_i\right)^n$-term. Thus,
\[F_0(\vc{x})=1+\left(\sum x_i\right)+\left(\sum x_i\right)^2+\cdots=\frac{1}{(1-x_1-\dots-x_k)}.\] 

Note that for any $j$, $D_{j-1}(\vc{a})=D_j(\vc{a})\cup D_j^*(\vc{a})$, and the two latter sets are disjoint. Indeed, a
permutation in $D_{j-1}$ either does or does not have a fixed point in the $j$:th block. Hence by Lemma \ref{lm:MjNj}, we have the identity \[|D_{j-1}(\vc{a})|=|D_j(\vc{a})|+|D_j(a_1,\dots,a_{j-1},a_j-1,a_{j+1},\dots,a_k)|.\] 
This holds also if $a_j=0$, if the last term is interpreted as $0$ in that case. 

In terms of generating functions, this gives the recursion $F_{j-1}(\vc{x})=(1+x_j)F_j(\vc{x})$. Hence
$F_0(\vc{x})=F_j(\vc{x})\prod_{i\leq j} (1+x_i)$. Thus, 
\[
F_j(\vc{x})=\frac{F_0(\vc{x})}{(1+x_1)\cdots(1+x_j)}=\frac{1+(\sum
  x_i)+(\sum x_i)^2+\cdots}{(1+x_1)\cdots(1+x_j)},
\]
and $|D_j(\vc{a})|$ is the coefficient for $x_1^{a_1}\cdots x_k^{a_k}$ in
the expansion of $F_j$. 
\end{proof}

\begin{corollary}
The number of derangements in $\SymGroup{\vc{a}}$ is the coefficient of $x_1^{a_1}\cdots x_k^{a_k}$ in the expansion of
\begin{equation}\label{eq:GenFk} F(\vc{x})=\frac{1}{(1+x_1)\cdots(1+x_k)(1-x_1-\dots-x_k)}.
\end{equation}
\end{corollary}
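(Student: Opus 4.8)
The corollary is an immediate specialization of Theorem~\ref{thm:Genf}, so the plan is simply to record why the reduction works. The generating function in~(\ref{eq:GenFk}) is exactly the expression~(\ref{eq:GenFj}) evaluated at $j=k$, since then the product $(1+x_1)\cdots(1+x_j)$ in the denominator runs over all $k$ blocks. By Definition~\ref{def:D_j} we have $D(\vc{a})=D_k(\vc{a})$, that is, the set of derangements in $\SymGroup{\vc{a}}$ coincides with the set of permutations in $\SymGroup{\vc{a}}$ having no fixed point in any of the blocks $A_1,\dots,A_k$; and since these blocks partition $[n]$, having no fixed point in any block is the same as having no fixed point at all.

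Concretely, I would first invoke Theorem~\ref{thm:Genf} with $j=k$ to conclude that $|D_k(\vc{a})|$ is the coefficient of $x_1^{a_1}\cdots x_k^{a_k}$ in
\[
\frac{1}{(1+x_1)\cdots(1+x_k)(1-x_1-\dots-x_k)},
\]
which is precisely $F(\vc{x})$ as displayed in~(\ref{eq:GenFk}). Then I would note the identification $D(\vc{a})=D_k(\vc{a})$ from Definition~\ref{def:D_j}, observing that the blocks $A_1,\dots,A_k$ exhaust the ground set $[n]$, so a permutation in $\SymGroup{\vc{a}}$ lies in $D_k(\vc{a})$ if and only if it is a derangement. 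Combining these two facts yields the statement.

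There is no real obstacle here: the content is entirely in Theorem~\ref{thm:Genf} and Lemma~\ref{lm:MjNj}, and the corollary only needs the bookkeeping observation that ``no fixed point in any block'' equals ``derangement'' when the blocks cover $[n]$. The proof is therefore a one-line deduction, and I would present it as such.

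\begin{proof}
This is the special case $j=k$ of Theorem~\ref{thm:Genf}. Indeed, setting $j=k$ in~(\ref{eq:GenFj}) gives exactly $F(\vc{x})$ in~(\ref{eq:GenFk}). Moreover, by Definition~\ref{def:D_j} we have $D(\vc{a})=D_k(\vc{a})$, and since the blocks $A_1,\dots,A_k$ partition $[n]$, a permutation in $\SymGroup{\vc{a}}$ has no fixed point in any block if and only if it is a derangement. Hence $|D(\vc{a})|=|D_k(\vc{a})|$ is the coefficient of $x_1^{a_1}\cdots x_k^{a_k}$ in the expansion of $F(\vc{x})$.
\end{proof}
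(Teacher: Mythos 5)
Your proof is correct and follows exactly the paper's own argument: specialize Theorem~\ref{thm:Genf} to $j=k$ and identify $D(\vc{a})=D_k(\vc{a})$ with the derangements in $\SymGroup{\vc{a}}$ since the blocks cover $[n]$. Nothing further is needed.
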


\begin{proof}
The set of derangements in $\SymGroup{\vc{a}}$ is just $D(\vc{a})=D_k(\vc{a})$. Letting $j=k$ in Theorem~\ref{thm:Genf} gives the generating function for $|D(\vc{a})|$.
\end{proof}

\section{An explicit enumeration} \label{sc:clofor}
It is not hard to explicitly calculate the numbers $|D(\vc{a})|$ from
here. We will use $\vc{x}^{\vc{a}}$ as shorthand for $\prod_i
x_i^{a_i}$. 

Every term $\vc{x}^{\vc{a}}$ in the expansion of $F(\vc{x})$ is obtained
by choosing $x_i^{b_i}$ from the factor $$\frac{1}{1+x_i}=\sum_{j\geq 0} (-x_i)^j,$$ for some $0 \leq b_i\leq a_i$. This gives us a coefficient of $(-1)^{\sum b_i}$. For each choice of $b_1,\dots, b_k$ we should multiply by $\vc{x}^{{\vc{a}-\vc{b}}}$ from the factor \[\frac{1}{(1-x_1-\dots-x_k)}=1+\left(\sum x_i\right)+\left(\sum x_i\right)^2+\cdots.\]

But every occurence of $\vc{x}^{{\vc{a}-\vc{b}}}$ in this expression comes from the term $(\sum x_i) ^{n-\sum b_j}$. Thus the coefficient of $\vc{x}^{{\vc{a}-\vc{b}}}$ is the multinomial coefficient \[\binom{n-\sum b_j}{a_1-b_1,\dots ,a_k-b_k}=\frac{(n-\sum b_j)!}{(a_1-b_1)!\cdots(a_k-b_k)!}.\]

Now since $|D(\vc{a})|$ is the coefficient of $\vc{x}^{\vc{a}}$ in $F_k(\vc{x})$, we conclude that
\[
\begin{split} \label{eq:Factorial}
|D(\vc{a})|&=\sum_{ \vc{0} \leq \vc{b}\leq \vc{a}}(-1)^{\sum{b_j}}\frac{(n-\sum b_j)!}{(a_1-b_1)!\cdots(a_k-b_k)!}\\ 
&=\frac{1}{\prod_i a_i!}\sum_{ \vc{0} \leq \vc{b} \leq \vc{a}}(-1)^{\sum{b_j}}\left(n-\sum b_j\right)!\prod_i\binom{a_i}{b_i}b_i!.
\end{split}
\]

While the expression \eqref{eq:Factorial} seems a bit more involved than necessary, it turns out to generalise in a nice way.

\section{Fixed point coloured permutations}

A \emph{fixed point coloured permutation} in $\lambda$ colours, or a fixed point $\lambda$-coloured permutation, is a permutation where we require each fixed point to take one of $\lambda$ colours. More formally it is a pair $(\pi, C)$ with $\pi\in\Sn$ and $C:F_\pi\rightarrow [\lambda]$, where $F_\pi$ is the set of fixed points of $\pi$. When there can be no confusion, we denote the coloured permutation $(\pi, C)$ by $\pi$. Thus, fixed point 1-coloured permutatations are simply ordinary permutations and fixed point 0-coloured permutations are derangements. The set of fixed point $\lambda$-coloured
permutations on $n$ elements is denoted $\Sn^\lambda$. 

For the number of $\lambda$-fixed point coloured permutations on $n$
elements, we use the notation $|\Sn^\lambda| = f_\lambda(n)$, the
{\em $\lambda$-factorial} of $n$. Of course, we have
$f_0(n) = D_n$ and $f_1(n) = n!$. Clearly,
\[
f_\lambda(n) = \sum_{\pi \in \Sn} \lambda^{\fix(\pi)},
\]
where $\fix(\pi)$ is the number of fixed points in $\pi$, and we use this
formula as the definition of $f_\lambda(n)$ for $\lambda \not\in
\mathbb{N}$. 

\begin{lemma} \label{lm:lamfakid}
For $\nu, \lambda \in \mathbb{C}$ and $n\in\mathbb N$, we have
\[
f_\nu(n) = \sum_j \binom{n}{j} f_{\lambda}(n - j) \cdot(\nu - \lambda)^j.
\]
\end{lemma}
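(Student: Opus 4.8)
The plan is to give a combinatorial double-counting argument, valid first for $\nu, \lambda$ nonnegative integers, and then extended to all complex values by a polynomial identity argument. Fix $n$ and think of the right-hand side as counting a structure on $[n]$. Given a fixed point $\lambda$-coloured permutation $\sigma$ on a subset of size $n-j$, a choice of which $j$ of the $n$ positions are "extra" (the $\binom{n}{j}$ factor), and a colouring of those $j$ extra positions in $\nu - \lambda$ colours, I want to assemble a fixed point $\nu$-coloured permutation on all of $[n]$. The idea is that the $j$ extra positions become additional fixed points, each of which gets one of $\nu - \lambda$ "new" colours, while the original fixed points of $\sigma$ keep their colours among the $\lambda$ "old" colours; together these give $\nu$ colours total on the fixed point set of the resulting permutation.

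The key steps, in order, are: (1) Reformulate both sides as polynomials in $\nu$ and $\lambda$ — the right side is manifestly a polynomial in $\nu$ and $\lambda$ of bounded degree (at most $n$ in each), as is $f_\nu(n) = \sum_{\pi\in\Sn}\nu^{\fix(\pi)}$, so it suffices to prove the identity for all pairs of nonnegative integers $\nu \geq \lambda$, a Zariski-dense set. (2) For such integers, partition the $\nu$ available colours into a set $L$ of $\lambda$ "old" colours and a set of $\nu - \lambda$ "new" colours. (3) Given a fixed point $\nu$-coloured permutation $\pi$ on $[n]$, let $j$ be the number of its fixed points whose colour is "new"; the set of those positions has $\binom{n}{j}$ possible... more precisely, sum over $\pi$: each $\pi$ contributes $1$, and grouping $\pi$ by the set $G$ of fixed points coloured with a new colour, the permutation restricted to $[n]\setminus G$ (after removing the positions in $G$, which are fixed points, via iterated $\psi$) is a fixed point $\lambda$-coloured permutation on an $(n-|G|)$-set, recording $|G|$, the $\binom{n}{|G|}$ choice of $G$, and the $(\nu-\lambda)^{|G|}$ choice of new colours on $G$. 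Running this correspondence backwards — insert the fixed points of $G$ back via $\phi_F$ — shows it is a bijection, giving exactly the claimed sum.

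The main obstacle I expect is purely bookkeeping: making precise the re-indexing when fixed points are removed and reinserted, i.e.\ that removing a set $G$ of fixed points from $\pi$ and then restoring them (with $\phi_G$) recovers $\pi$, and that the resulting map is genuinely a bijection onto (fixed point $\lambda$-coloured permutation on $[n-|G|]$, subset $G$, colouring of $G$). Since removing or inserting a fixed point does not disturb which other positions are fixed points or what their colours are, this is straightforward, but it must be stated carefully so that the colour on a fixed point of the smaller permutation is unambiguously one of the $\lambda$ old colours. Once the integer case is in hand, the extension to complex $\nu,\lambda$ is immediate: both sides are polynomials in $(\nu,\lambda)$ agreeing on $\mathbb{N}\times\mathbb{N}$ with $\nu\ge\lambda$, hence everywhere.
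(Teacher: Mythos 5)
Your proposal is correct and follows essentially the same route as the paper: count fixed point $\nu$-coloured permutations according to the set of fixed points bearing one of the $\nu-\lambda$ ``new'' colours (removing and reinserting them with $\psi$ and $\phi$), then extend from integers to all of $\mathbb{C}$ by a polynomial identity argument. The only, harmless, difference is that you restrict to integer pairs with $\nu\ge\lambda$, which indeed suffice for the polynomial extension, whereas the paper additionally handles the case $\nu<\lambda$ by an inclusion--exclusion argument.
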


\begin{proof}
It suffices to show this for $\nu, \lambda, n \in \mathbb{N}$, since
the identity is polynomial in $\nu$ and $\lambda$, so if it holds on
$\mathbb{N}\times\mathbb{N}$ it must hold on all of
$\mathbb{C}\times\mathbb{C}$. 

We divide the proof into three parts. First, assume $\nu =
\lambda$. Then all terms in the sum vanish except for $j = 0$, when we
get $f_\nu(n) = f_\lambda(n)$.

Secondly, assuming $\nu > \lambda$, we let $j$ denote the number of fixed points
in $\pi \in \Sn^\nu$ which are coloured with colours from
$[\lambda + 1, \nu]$. These fixed points can be chosen in $\binom{n}{j}$ ways,
there are $f_\lambda(n - j)$ ways to permute and colour the
remaning elements, and the colours of the high coloured fixed points
can be chosen in $(\nu - \lambda)^j$ ways. Thus, the equality holds.

Finally, assuming $\nu < \lambda$, we prescribe $j$ fixed
points in $\pi \in \Sn^\lambda$ which only get to choose their colours
from $[\nu + 1, \lambda]$. These fixed points can be chosen in
$\binom{n}{j}$ ways, the remaining elements can be permuted in $f_\lambda(n - j) $ ways and the chosen fixed points can be coloured in
$(\lambda - \nu)^j$ ways, so by the principle of inclusion-exclusion,
the equality holds. 
\end{proof}

With $\lambda = 1$ and replacing $\nu$ by $\lambda$, we find that
\begin{equation} \label{eq: lamfaktrunc}
f_\lambda(n) = n! \left( 1 + \frac{(\lambda - 1)}{1!} + \frac{(\lambda - 1)^2}{2!} + 
\cdots + \frac{(\lambda - 1)^n}{n!} \right) = n! \ \expn(\lambda - 1).
\end{equation}
Here we use $\exp_n$ to denote the truncated series expansion of the exponential function.
In fact, $\lim_{n \rightarrow \infty} n! \re^{(\lambda - 1)} - f_\lambda(n) = 0$ for
all $\lambda \in \mathbb{C}$, although we cannot in general approximate $f_\lambda(n)$ by the nearest integer of $n! \re^{\lambda - 1}$ as for derangements.

The formula \eqref{eq: lamfaktrunc} also shows that 
\begin{equation} \label{lamfakrec}
f_\lambda(n) = n f_\lambda(n-1) + (\lambda - 1)^n, \qquad f_\lambda(0) = 1
\end{equation}
which generalises the well known recursions $|D_n| = n |D_{n-1}| + (-1)^n$ and $n! = n (n-1)!$.

\section{Enumerating $D(\vc{a})$ using fixed point coloured permutations}

Another consequence of \eqref{eq: lamfaktrunc} is that the $\lambda$-factorial satisfies the following rule for differentiation, which is similar to the rule for differentiating powers of $\lambda$:
\begin{equation} \label{ruleofdifferentiation} \frac{d}{d\lambda}f_\lambda\left(n\right) = n\cdot f_\lambda\left(n-1\right).\end{equation}

This follows immediately from the fact that $f_\lambda(n)$ is equal to $n!$
times the truncated series expansion of $\re^{\lambda-1}$, as in
(\ref{eq: lamfaktrunc}). Regarding
$n$ as the cardinality of a set $X$, the differentiation rule
(\ref{ruleofdifferentiation}) translates to 
\begin{equation} \label{setdifferentiation}
\frac{d}{d\lambda}f_\lambda\left(\left|X\right|\right) = \sum_{x\in X}f_\lambda\left(\left|X\smallsetminus\{x\}\right|\right).\end{equation}

Products of $\lambda$-factorials can of course be differentiated by the product formula. This implies that if $X_1,\dots X_k$ are disjoint sets, then $$\frac{d}{d\lambda}\prod_i f_\lambda\left(\left|X_i\right|\right)= \sum_{x\in \cup X_j} \prod_i f_\lambda\left(\left|X_i\smallsetminus\{x\}\right|\right).$$

Now consider the expression 

\begin{equation}\label{rty}
\sum_{B\subseteq [n]}(-1)^{\left|B\right|}f_\lambda\left(\left|[n]\smallsetminus B\right|\right)\prod_i^k f_\lambda\left(\left|A_i\cap B\right|\right).
\end{equation} 

This is obtained from \eqref{eq:Factorial} by deleting the factor $1/\prod_i a_i!$ and replacing the other factorials by $\lambda$-factorials. For $\lambda = 1$, \eqref{rty} is therefore $\left|\Phi_{\vc{a}}^{-1}(D(\vc{a}))\right|$, the number of permutations that, when sorted in decreasing order within the blocks, have no fixed points. We want to show that \eqref{rty} is independent of $\lambda$. The derivative of \eqref{rty} is, by the rule \eqref{setdifferentiation} of differentiation, 

\begin{equation} \label{derivative}
\sum_{B\subseteq [n]}(-1)^{\left|B\right|} \sum_{x=1}^n f_\lambda\left(\left|[n]\smallsetminus B\smallsetminus \{x\}\right|\right)\prod_{i=1}^k f_\lambda\left(\left|(A_i\cap B)\smallsetminus \{x\}\right|\right).
\end{equation}

Here each product of $\lambda$-factorials occurs once with $x\in B$ and once with $x\notin B$. Because of the sign $(-1)^{\left|B\right|}$, these terms cancel. Therefore \eqref{derivative} is identically zero, which means that \eqref{rty} is independent of $\lambda$. Hence we have proven the following theorem:

\begin{theorem} \label{thm:lamfak}
For any $\lambda\in \mathbb C$, the identity 
\begin{equation} \label{eq:lamfak}
\left|\Phi_{\vc{a}}^{-1}(D(\vc{a}))\right|=\sum_{{0}
\leq\vc{b}\leq\vc{a}}(-1)^{\sum{b_j}}\cdot f_\lambda\left(n-\sum
b_j\right) \prod_i\binom{a_i}{b_i}\cdot f_\lambda\left(b_i\right)
\end{equation}
holds.
\end{theorem}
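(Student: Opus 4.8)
The plan is to prove the stronger statement that the right-hand side of \eqref{eq:lamfak}, read as a function of $\lambda$, is constant, and then pin down that constant by setting $\lambda=1$. First I would turn the factorial formula \eqref{eq:Factorial} into a statement about fibers: multiplying \eqref{eq:Factorial} through by $\prod_i a_i!$ and using that every fiber of $\Phi_{\vc a}$ has $\prod_i a_i!$ elements yields
\[
\bigl|\Phi_{\vc a}^{-1}(D(\vc a))\bigr| = \sum_{\vc 0 \le \vc b \le \vc a} (-1)^{\sum b_j}\,(n-\textstyle\sum_j b_j)! \prod_i \binom{a_i}{b_i} b_i!.
\]
The next step is to recognise this sum as \eqref{rty} at $\lambda=1$: for a fixed $\vc b$ with $0\le b_i\le a_i$, the number of sets $B\subseteq[n]$ with $|A_i\cap B|=b_i$ for all $i$ is $\prod_i\binom{a_i}{b_i}$, and for such a $B$ we have $(n-\sum_j b_j)! = f_1(|[n]\smallsetminus B|)$ and $b_i! = f_1(|A_i\cap B|)$. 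Hence \eqref{rty} evaluated at $\lambda=1$ is $\bigl|\Phi_{\vc a}^{-1}(D(\vc a))\bigr|$, and everything reduces to showing that \eqref{rty} is independent of $\lambda$.

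For the independence, I would argue that \eqref{rty} is a polynomial in $\lambda$ — each $f_\lambda(m)$ is a polynomial in $\lambda$ of degree $m$ by \eqref{eq: lamfaktrunc} — so it is enough to show that its $\lambda$-derivative is identically zero. Since the $A_i$ partition $[n]$, the $k+1$ sets $[n]\smallsetminus B,\ A_1\cap B,\ \dots,\ A_k\cap B$ are pairwise disjoint with union $[n]$; applying the product rule together with the differentiation rule \eqref{setdifferentiation} to \eqref{rty} then produces exactly the expression \eqref{derivative}. Finally I would exhibit a sign-reversing involution on the index set of \eqref{derivative}: fixing $x$ and toggling membership of $x$ in $B$, i.e. $(B,x)\mapsto(B\triangle\{x\},x)$. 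Deleting $\{x\}$ from each of the sets $[n]\smallsetminus B$ and $A_i\cap B$ makes the resulting cardinalities insensitive to whether $x\in B$, so the two paired summands of \eqref{derivative} carry identical products of $\lambda$-factorials, while toggling $x$ flips the sign $(-1)^{|B|}$; the pair therefore cancels. As every index occurs in exactly one such pair, \eqref{derivative} vanishes identically, \eqref{rty} is constant in $\lambda$, and evaluating at $\lambda=1$ gives \eqref{eq:lamfak}.

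The steps that need genuine (if routine) care are the two bookkeeping identifications — repackaging the sum over $\vc b$ in \eqref{eq:Factorial} as the sum over subsets $B$, and checking that the two summands matched by the involution are truly equal up to sign once the element $x$ is removed. The one conceptual point to get right is the differentiation step: one must verify that the sets $[n]\smallsetminus B, A_1\cap B,\dots,A_k\cap B$ are disjoint (so the product rule introduces no cross terms) and invoke the degree bound on $f_\lambda(m)$ to justify passing from "derivative zero" to "constant". An alternative route would be to establish \eqref{eq:lamfak} for $\lambda\in\mathbb N$ by an inclusion–exclusion argument in the spirit of Lemma~\ref{lm:lamfakid} and then extend by polynomiality, but the telescoping-derivative argument above is shorter and avoids case analysis.
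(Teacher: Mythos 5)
Your proposal is correct and follows essentially the same route as the paper's own proof of Theorem~\ref{thm:lamfak}: identify the right-hand side at $\lambda=1$ with \eqref{eq:Factorial} (cleared of the factor $1/\prod_i a_i!$), then show via the differentiation rule \eqref{setdifferentiation} and the cancellation of the $x\in B$ and $x\notin B$ terms in \eqref{derivative} that the expression is independent of $\lambda$. The alternative you sketch at the end (proving the identity for positive integer $\lambda$ and extending by polynomiality) is in the spirit of the paper's second, recursive proof in Section~\ref{sc: lamrec}.
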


A particularly interesting special case is when we put $\lambda=0$. In this case, $f_0(n) = D_n$, so

\begin{equation} \label{lambda=0}
|D(\vc{a})| = \frac{1}{\prod_i a_i!}\sum_{\vc{0} \leq \vc{b} \leq \vc{a}}(-1)^{\sum{b_j}}D_{n-\sum b_j}\prod_i\binom{a_i}{b_i}D_{b_i}.
\end{equation}

This equation has some advantages over \eqref{eq:Factorial}. It has a clear main term, the one with $\vc{b} = \vc{0}$. Moreover, since $D_1 = 0$, the number of terms does not increase if blocks of length 1 are added.
 
\section{A recursive proof of Theorem \ref{thm:lamfak}} \label{sc: lamrec}

We will now proceed by proving Theorem \ref{thm:lamfak} in a more
explicit way. This proof will use the sorting operator
$\Phi_{\vc{a}}$ and our notion of fixed point coloured permutations,
and will not need to assume the case $\lambda=0$ to be known. First we
need some new terminology.

\begin{definition}
We let $\hat D_j(\vc{a})\subseteq \SymGroup{\vc{a}}$ denote the set of permutations in $\SymGroup{\vc{a}}$ that have a fixed point in $A_j$, but that have no fixed points in any other block. 
\end{definition}

The proof of Lemma \ref{lm:MjNj} goes through basically unchanged,
when we allow no fixed points in $A_{j+1},\dots, A_k$:

\begin{lemma} \label{lm:FixRem}
There is a bijection between $D(a_1,\dots,a_k)$ and 
\\$\hat D_\ell(a_1,\dots,a_{\ell-1},a_\ell+1,a_{\ell+1},\dots,a_k)$.
\end{lemma}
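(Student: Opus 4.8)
The plan is to mimic the proof of Lemma \ref{lm:MjNj} almost verbatim, adjusting only the bookkeeping of fixed points in the blocks $A_{\ell+1},\dots,A_k$. Take $\sigma=\sigma_1\dots\sigma_n\in D(a_1,\dots,a_k)$ and focus on the block $A_\ell=\{p,p+1,\dots,q\}$. Since $\sigma$ is decreasing on $A_\ell$ and has no fixed point there, there is a unique index $r\in[p,q+1]$ such that $\sigma_p,\dots,\sigma_{r-1}$ are all excedances ($\sigma_i>i$) and $\sigma_r,\dots,\sigma_q$ are all deficiencies ($\sigma_i<i$); the splitting index $r$ is forced because along a maximal decreasing run the quantity $\sigma_i-i$ is strictly decreasing, so it changes sign at most once. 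Then set $\tau=\phi_r(\sigma)\in S_{n+1}$, which inserts a new fixed point at position $r$ inside (the enlarged) $\ell$th block and shifts everything to its right by one.

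The key verifications are: (i) $\tau\in\SymGroup{(a_1,\dots,a_{\ell-1},a_\ell+1,a_{\ell+1},\dots,a_k)}$, i.e.\ $\tau$ is still decreasing within each block — this holds because inserting the value $r$ at position $r$ between the excedance part and the deficiency part of a decreasing block keeps the block decreasing (the entries before position $r$ exceeded their indices, hence exceed $r$ after the relabelling, and those after were below their indices, hence stay below $r$), and no other block is disturbed except by a harmless global shift; (ii) $\tau$ has exactly one fixed point in $A_\ell$ and none elsewhere — the new entry at position $r$ is a fixed point, the old fixed-point-free structure of $\sigma$ is preserved under $\phi_r$ (fixed points are neither created nor destroyed by $\phi_r$ except the inserted one, by the definition of $\phi_{j,k}$ with $j=k=r$), and in particular blocks $A_{\ell+1},\dots,A_k$ remain fixed-point free because they were in $\sigma$; together with the already-empty blocks $A_1,\dots,A_{\ell-1}$ this gives $\tau\in\hat D_\ell(a_1,\dots,a_{\ell-1},a_\ell+1,a_{\ell+1},\dots,a_k)$. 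Finally, the inverse is $\tau\mapsto\psi_s(\tau)$ where $s$ is the position of the unique fixed point of $\tau$ in its $\ell$th block; one checks $\psi_r(\phi_r(\sigma))=\sigma$ directly from the definitions, and conversely that removing the fixed point from a member of $\hat D_\ell$ lands back in $D(a_1,\dots,a_k)$ with the $\ell$th block shrunk by one, so the two maps are mutually inverse.

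The only point needing slightly more care than in Lemma \ref{lm:MjNj} is clause (ii) for the blocks to the \emph{right} of $A_\ell$: in Lemma \ref{lm:MjNj} there was no constraint on $A_{j+1},\dots,A_k$, whereas here we must observe that $\phi_r$ acts on positions $>r$ purely by translation $i\mapsto i+1$ together with the value relabelling $v\mapsto v+1$ for $v\ge r$, so a position $i$ in some $A_m$ with $m>\ell$ satisfies $\sigma_i=i$ if and only if the corresponding position $i+1$ in the shifted block satisfies $\tau_{i+1}=i+1$; hence "fixed-point free on $A_{\ell+1},\dots,A_k$" is transported faithfully. This is the main (and essentially the only) obstacle, and it is routine. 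I would also remark explicitly that the argument requires $a_\ell\ge 0$ only, with the degenerate reading when an adjacent block has length zero handled exactly as the parenthetical remark after Lemma \ref{lm:MjNj}.
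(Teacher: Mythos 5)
Your proposal is correct and is essentially the paper's own argument: the paper proves Lemma \ref{lm:FixRem} simply by remarking that the proof of Lemma \ref{lm:MjNj} (split the block $A_\ell$ at the unique excedance/deficiency boundary $r$, insert a fixed point via $\phi_r$, invert with $\psi_r$) goes through unchanged when one additionally forbids fixed points in $A_{\ell+1},\dots,A_k$. The extra verification you spell out --- that $\phi_r$ acts on positions to the right of $r$ by a shift together with an order-preserving relabelling of values, so it neither creates nor destroys fixed points there --- is exactly the ``basically unchanged'' point the paper leaves implicit.
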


We now have the machinery needed to give a second proof of Theorem \ref{thm:lamfak}.

\begin{proof}[Proof of Theorem \ref{thm:lamfak}]
It suffices to show this for $\lambda=1,2,\dots$, because then for
given $\vc{a}$, the expression is just a polynomial in $\lambda$,
which is constant on the positive integers, and hence constant. So
assume $\lambda$ is a positive integer.  

In the case where $\vc{a}=(1,\dots ,1)$, $\Phi$ is the identity, and
(\ref{eq:lamfak}) can be written \[\left|D(\vc{a})\right|=\sum
(-1)^j\binom{n}{j} f_\lambda\left(n-j\right)\cdot \lambda^j ,\] where we have made the
substitution $j=\sum b_i$. This is true by letting $\nu = 0$ in Lemma
\ref{lm:lamfakid}. We will proceed by induction to show that
(\ref{eq:lamfak}) holds for any composition $\vc{a}$.  

Suppose it holds for the compositions
$\vc{a'}=(a_1,\dots,a_{\ell-1}, 1,\dots, 1, a_{\ell+1},\dots, a_k)$ (with
$a_\ell$ ones in the middle) and $\vc{a''}=(a_1,\dots,a_{\ell-1},
a_\ell-1, a_{\ell+1},\dots, a_k)$. We will prove that it holds for
$\vc{a}=(a_1,\dots,a_k)$. 

First, we enumerate the disjoint union
$\Phi_{\vc{a}}^{-1}(D(\vc{a}))\cup\Phi_{\vc{a}}^{-1}(\hat
D_\ell(\vc{a}))$. This is just the set of permutations that, when
sorted, have no fixed points except possibly in $A_\ell$. 

Sort these decreasingly in all blocks except $A_\ell$ (which means
that we apply $\Phi_{\vc{a^{'}}}$ to them). Then we enumerate them
according to the number $t$ of fixed points in $A_\ell$. Note that
$A_\ell$ splits into several blocks $A_\alpha$, one for each non-fixed
point. We let $p$ denote the sum
of the $b_\alpha$:s for these blocks, which gives $\prod f_\lambda(b_\alpha)=\lambda^{p}$. 

If we let $\vc b$ range over $k$-tuples $(b_1 , \dots , b_k)$ and $\hat{\vc{b}}_\ell$ range over the $(k-1)$-tuples $(b_1,\dots, b_{\ell-1},b_{\ell+1},\dots ,b_k)$, we use the induction hypothesis to get 

\[\begin{split}
& \quad |\Phi_{\vc{a}}^{-1}(D(\vc{a}))|+|\Phi_{\vc{a}}^{-1}(\hat
 D_\ell(\vc{a}))| \\
&=\sum_{\hat{\vc{b}}_\ell} (-1)^{\sum_{i \neq \ell} b_i} \left( \prod_{i\neq
 \ell}\binom{a_i}{b_i} f_\lambda(b_i) \right) \sum_t \binom{a_\ell}{t}
 \sum_p \binom{a_\ell-t}{p} (-1)^p \cdot f_\lambda\big(n - t - \sum_{i \neq \ell} b_i - p\big) \lambda^p \\
&= \sum_{\hat{\vc{b}}_\ell} (-1)^{\sum_{i \neq \ell} b_i} \left( \prod_{i\neq
 \ell}\binom{a_i}{b_i}f_\lambda(b_i) \right) \sum_{b_\ell} \sum_p f_\lambda\big(n -
 \sum_{i \neq \ell} b_i - b_\ell\big) (-1)^p \lambda^p \binom{a_\ell}{b_\ell-p}
 \binom{a_\ell-b_\ell+p}{p} \\
&= \sum_{\hat{\vc{b}}_\ell} (-1)^{\sum_{i \neq \ell} b_i} \left( \prod_{i\neq
 \ell}\binom{a_i}{b_i}f_\lambda(b_i) \right) \sum_{b_\ell} \sum_p f_\lambda\big(n -
 \sum_{i \neq \ell} b_i - b_\ell\big) (-1)^p \lambda^p \binom{a_\ell}{b_\ell}
 \binom{b_\ell}{p} \\
&= \sum_{\hat{\vc{b}}_\ell} (-1)^{\sum_{i \neq \ell} b_i} \left( \prod_{i\neq
 \ell}\binom{a_i}{b_i}f_\lambda(b_i) \right) \sum_{b_\ell} (-1)^{b_\ell} f_\lambda\big(n -
 \sum_{i \neq \ell} b_i - b_\ell\big) \binom{a_\ell}{b_\ell}
 (\lambda - 1)^{b_\ell} \\
&= \sum_{\vc{b}} (-1)^{\sum b_i} \left( \prod_{i\neq
 \ell}\binom{a_i}{b_i}f_\lambda(b_i) \right) f_\lambda \big(n -
 \sum b_i\big) \binom{a_\ell}{b_\ell} (\lambda - 1)^{b_\ell}.
\end{split}
\]

This expression makes sense, as the binomial coefficients become zero
unless ${0} \leq \vc{b} \leq \vc{a}$. On the other hand, by Lemma
\ref{lm:FixRem} and the induction hypothesis,  
\[
\begin{split}
|\Phi_{\vc{a}}^{-1}(\hat D_\ell(\vc{a}))| &=\prod {a_i!} \cdot |\hat D_\ell(\vc{a})|=\prod a_i!\cdot |D(\vc{a''})|= a_\ell\cdot |\Phi_{{a''}}^{-1}(D(\vc{a''}))| \\
&=a_\ell\cdot \sum_{\vc{b}}(-1)^{\sum b_i}\binom{a_\ell-1}{b_\ell}f_\lambda(b_\ell) \cdot f_\lambda\left(n-1-\sum b_i\right)\prod_{i\neq l}\binom{a_i}{b_i}f_\lambda(b_i).
\end{split}
\]
Noting that $a_\ell\binom{a_\ell-1}{b_\ell}=(b_\ell+1)\binom{a_\ell}{b_\ell+1}$, we shift the parameter $b_\ell$ by one, and get
\[|\Phi_{\vc{a}}^{-1}(\hat D_\ell(\vc{a}))|=-\sum_{\vc{b}}(-1)^{\sum b_i}\binom{a_\ell}{b_\ell}b_\ell\cdot f_\lambda(b_\ell-1)\cdot f_\lambda\left(n-\sum b_i\right)\prod_{i\neq l}\binom{a_i}{b_i}f_\lambda(b_i) .
\]
Thus we can write
\[
\begin{split}
|\Phi_{\vc{a}}^{-1}(D(\vc{a}))|&=\sum_{\vc{b}}(-1)^{\sum
 b_i}\binom{a_\ell}{b_\ell}(\lambda-1)^{b_\ell}f_\lambda \big(n-\sum b_i\big)\prod_{i\neq
 l}\binom{a_i}{b_i}f_\lambda (b_i) - |\Phi_{\vc{a}}^{-1}(\hat
 D_\ell(\vc{a}))| \\
&=\sum_{\vc{b}} (-1)^{\sum b_i}f_\lambda\big(n-\sum
 b_i\big)\binom{a_\ell}{b_\ell}\left((\lambda - 1)^{b_\ell} +b_\ell\cdot
 f_\lambda\left(b_\ell-1\right)\right)\prod_{i\neq l}\binom{a_i}{b_i}f_\lambda(b_i) \\
&= \sum_{\vc{b}} (-1)^{\sum b_i} f_\lambda\big(n - \sum b_i\big) \prod_i \left(
 \binom{a_i}{b_i} f_\lambda(b_i) \right).
\end{split}
\]

\end{proof} 

We also note that Theorem \ref{thm:lamfak} can be used to enumerate
permutations in $\SymGroup{\vc{a}}$ with $\mu$ allowed fixed point
colours, and even $\mu_i$ fixed point colours in block $A_i$.

\begin{corollary}
For any $\lambda \in \mathbb{C}$ and natural numbers $\mu_i, 1 \leq i
\leq k$, the number of permutations $(\pi, C)$ where $\pi \in
\SymGroup{\vc{a}}$ and $(j \in A_i, \pi(j) = j) \Rightarrow C(j) =
[\mu_i]$ is given by
\[
\sum_{\vc{0} \leq \vc{c} \leq \vc{1}} \sum_{\vc{0} \leq \vc{b} \leq
\vc{a} - \vc{c}} (-1)^{\sum{b_j}} f_\lambda\left(\sum a_j - \sum c_j -\sum
b_j\right) \prod_i \binom{a_i-c_i}{b_i} f_\lambda(b_i)\cdot
\mu_i^{c_i}. 
\] 
\end{corollary}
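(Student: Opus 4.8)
The plan is to reduce the corollary to a sum of instances of Theorem~\ref{thm:lamfak}, one for each set of blocks that is permitted to carry a fixed point, weighted by the colour counts.

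First I would split the objects to be enumerated according to the set $S\subseteq[k]$ of blocks that actually contain a fixed point; since each block is decreasing, a permutation in $\SymGroup{\vc{a}}$ has at most one fixed point per block, so this is a genuine partition of the count. Let $\vc{c}\in\{0,1\}^k$ be the indicator vector of $S$. The fixed-point colours for the blocks in $S$ can be chosen in $\prod_{i\in S}\mu_i=\prod_i\mu_i^{c_i}$ ways. For the remaining structure I would peel off the forced fixed points one block at a time, iterating the construction behind Lemma~\ref{lm:FixRem} over the blocks $A_i$ with $i\in S$: this replaces each part $a_i$ by $a_i-c_i$ and identifies the remaining set with the one counted by $\bigl|\Phi_{\vc{a}-\vc{c}}^{-1}(D(\vc{a}-\vc{c}))\bigr|$, where $\vc{a}-\vc{c}$ is the composition with parts $a_i-c_i$. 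Two points need checking here: that deleting the fixed point of one block leaves the fixed-point status of every other position unchanged, which is immediate from the matrix description of $\psi$ as deleting one row and one column; and that the $\prod_i (a_i-c_i)!$ factors arising from the fibres of $\Phi_{\vc{a}-\vc{c}}$ are absorbed correctly. Summing over $S$ then shows that the quantity in question equals $\sum_{\vc{0}\le\vc{c}\le\vc{1}}\bigl(\prod_i\mu_i^{c_i}\bigr)\bigl|\Phi_{\vc{a}-\vc{c}}^{-1}(D(\vc{a}-\vc{c}))\bigr|$.

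Then I would apply Theorem~\ref{thm:lamfak} with $\vc{a}-\vc{c}$ in place of $\vc{a}$: that composition has size $\sum a_j-\sum c_j$ and parts $a_i-c_i$, so $\bigl|\Phi_{\vc{a}-\vc{c}}^{-1}(D(\vc{a}-\vc{c}))\bigr|$ is precisely the inner sum over $\vc{b}$ (for this fixed $\vc{c}$) occurring in the displayed formula, valid for every $\lambda\in\mathbb{C}$ since Theorem~\ref{thm:lamfak} holds for every $\lambda$. Substituting this and merging the two sums gives exactly the displayed formula; the constraint $\vc{b}\le\vc{a}-\vc{c}$ need not be written out because $\binom{a_i-c_i}{b_i}=0$ whenever $b_i>a_i-c_i$. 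Note that no polynomiality trick in $\lambda$ is required here, as the identity $\bigl|\Phi_{\vc{a}-\vc{c}}^{-1}(D(\vc{a}-\vc{c}))\bigr|=(\text{that double sum})$ is already valid for all complex $\lambda$. I expect the only non-routine step to be the middle one: pinning down exactly which set the left-hand side enumerates, and verifying that the iterated application of Lemma~\ref{lm:FixRem} is legitimate while keeping track of the factorials coming from the fibres of the sorting map.
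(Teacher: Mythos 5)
Your proposal is correct and follows essentially the same route as the paper's own (very brief) proof: split according to the indicator vector $\vc{c}$ of blocks containing a fixed point, strip those fixed points via Lemma~\ref{lm:FixRem}, apply Theorem~\ref{thm:lamfak} to the composition $\vc{a}-\vc{c}$, and weight by $\prod_i \mu_i^{c_i}$. The factorial bookkeeping you flag is indeed the only delicate point, and it is a looseness of the statement itself rather than of your argument: for fixed $\vc{c}$ the inner sum equals $\bigl|\Phi^{-1}_{\vc{a}-\vc{c}}(D(\vc{a}-\vc{c}))\bigr| = \prod_i (a_i-c_i)!\cdot|D(\vc{a}-\vc{c})|$, so the displayed formula counts fibre elements under the sorting map (for instance it gives $2$ when $k=1$, $\vc{a}=(2)$), whereas the literal number of coloured permutations $(\pi,C)$ with $\pi\in\SymGroup{\vc{a}}$ would be $\sum_{\vc{c}}\prod_i\mu_i^{c_i}\,|D(\vc{a}-\vc{c})|$; your reduction to $\sum_{\vc{c}}\prod_i\mu_i^{c_i}\,\bigl|\Phi^{-1}_{\vc{a}-\vc{c}}(D(\vc{a}-\vc{c}))\bigr|$ is exactly what the stated formula computes, and the rest of your argument (Theorem~\ref{thm:lamfak} valid for all $\lambda\in\mathbb{C}$, vanishing binomial coefficients handling the range of $\vc{b}$) is sound.
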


\begin{proof}
The numbers $c_i$ are one if $A_i$ contains a fixed point and zero
otherwise. We may remove these fixed points and consider a fixed point
free permutation, enumerated above. We then reinsert the fixed points
and colour them in every allowed combination.
\end{proof}

 
\section{A correlation result} \label{sec:Correlation}
 
Taking a permutation at random in $\Sn$, the chances are about $1/\re$ that it
is fixed point free, since there are $n!\expn(-1)$ fixed point free permutations
in $\Sn$. Moreover, there are $n!/\vc{a}!$ 
permutations in $\SymGroup{\vc{a}}$.

If belonging to $\SymGroup{\vc{a}}$ and being fixed point free were two
independent events, we would have $n!\expn(-1)$ permutations in
$\Phi^{-1}(D(\vc{a}))$. This is not the case, although the leading term in
(\ref{eq:Factorial}) is this very number. The following theorem implies, in particular, that belonging to $\SymGroup{\vc{a}}$ and being fixed point free are almost always positively correlated events. The sole exception is when $\vc{a}$ is a single block of odd length, in which
case every permutation gets a fixed point when sorted. 

For two compositions $\vc{a}$ and $\vc{b}$ of $n$, we say that $\vc a \geq \vc b$
if, when sorted decreasingly, $\sum_{i \leq j} a_i \geq \sum_{i \leq j}b_i$ for all $j$.

\begin{theorem} \label{thm:correl} If $\vc{a} \geq \vc{b}$ and $\vc{a}$ is not a single block of odd size, then
\[
 |\Phi^{-1}_{\vc{a}}(D(\vc{a}))| \geq |\Phi^{-1}_{\vc{b}}(D(\vc{b}))|.
\] 
\end{theorem}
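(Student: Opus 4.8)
The plan is to reduce the theorem to a single elementary inequality between ``adjacent'' compositions, and then prove that inequality by the combinatorial tools already developed. First I would observe that the partial order $\vc a \geq \vc b$ on compositions of $n$ (after sorting decreasingly, it is exactly the dominance order on partitions) is generated by elementary moves that take a partition $\vc b$ and transfer a single box from one part to a weakly larger part; concretely, if $\vc a > \vc b$ then there is a chain $\vc b = \vc b^{(0)} < \vc b^{(1)} < \cdots < \vc b^{(m)} = \vc a$ where each step merges some of the mass, and in fact it suffices to handle the step that replaces two blocks of sizes $p \geq q$ by blocks of sizes $p+1$ and $q-1$. So the crux is: if $p \geq q \geq 1$ and $(p+1) + (q-1) = p + q$, then moving a box from the $q$-block to the $p$-block does not decrease $|\Phi^{-1}(D(\cdot))|$, provided we are not in the degenerate situation $q=1$, $p$ odd, $k=1$ (a single odd block).

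The main computational engine will be formula \eqref{lambda=0}, evaluated at $\lambda = 0$, which expresses $|\Phi_{\vc a}^{-1}(D(\vc a))| = \prod_i a_i! \cdot |D(\vc a)|$ divided appropriately; more useful is the raw sum
\[
|\Phi_{\vc a}^{-1}(D(\vc a))| = \sum_{\vc 0 \leq \vc b \leq \vc a} (-1)^{\sum b_j} D_{n - \sum b_j} \prod_i \binom{a_i}{b_i} D_{b_i},
\]
where $D_m$ is the $m$-th derangement number and $D_1 = 0$, so only $b_i \in \{0\} \cup \{2, 3, \dots\}$ contribute in each coordinate. I would freeze all coordinates other than the two blocks being modified, say blocks of sizes $p$ and $q$, and write the quantity as a double sum over $(b_p, b_q)$ against a fixed ``background'' weight $W(s) = \sum_{\hat{\vc b}} (-1)^{\sum \hat b_j} D_{(n - p - q) - \sum \hat b_j + \text{(slack)}} \prod \binom{a_i}{b_i} D_{b_i}$ depending only on $s = b_p + b_q$. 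Then the difference $|\Phi_{(\dots,p+1,q-1,\dots)}^{-1}(D)| - |\Phi_{(\dots,p,q,\dots)}^{-1}(D)|$ becomes $\sum_s W(s) \cdot \Delta(s)$ where $\Delta(s)$ is an explicit alternating combinatorial difference of products of binomials and derangement numbers. The key lemma will be a sign/positivity statement: each $\Delta(s)$, or an aggregated version of it, has a definite sign, and the background weights $W(s)$ are themselves (up to a uniform sign) counting something nonnegative — indeed $W(s)$ should be interpretable, via Theorem~\ref{thm:lamfak} applied to the reduced composition with the two special blocks deleted, as a count of permutations with no fixed points outside the two special blocks, which is manifestly nonnegative.

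An alternative, and probably cleaner, route that I would pursue in parallel is purely bijective: construct an injection from $\Phi_{\vc b}^{-1}(D(\vc b))$ into $\Phi_{\vc a}^{-1}(D(\vc a))$ for the elementary move. Given a permutation $\sigma$ that is fixed-point-free after sorting the $\vc b$-blocks, I want to produce one that is fixed-point-free after sorting the $\vc a$-blocks, where $\vc a$ merges a box from the small block into the large block. The natural idea is to use the maps $\psi$ and $\phi$ from the Definition: delete the entry occupying the position that moves from the small block to the large block, and reinsert it; the analysis in the proof of Lemma~\ref{lm:MjNj} — splitting a block into an excedance part and a deficiency part so that inserting a new element at the boundary creates exactly one fixed point, or dually removing one — is exactly the mechanism that controls whether sorted fixed points appear. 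The technical content is to check that one can always choose the reinsertion so as not to create a sorted fixed point in the enlarged block, and that the only obstruction is the single-odd-block case (where, as the text notes, \emph{every} sorted permutation has a fixed point, because a decreasing sequence of odd length fixes its middle element).

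The hard part will be pinning down the exceptional case and the boundary bookkeeping: showing that the claimed positivity $W(s)\,\Delta(s) \geq 0$ is genuinely uniform and that equality in the theorem (which does occur, e.g. between $\vc a$ and $\vc b$ both refining to all-ones, or comparisons forced by $D_1 = 0$) is consistent, and in the bijective approach, verifying that the excedance/deficiency split always leaves enough room to avoid a new fixed point when $\vc a$ is not a lone odd block. I expect the derangement-number identity $D_{m} = (m-1)(D_{m-1} + D_{m-2})$ together with $\sum_j (-1)^j \binom{m}{j} D_{m-j} = 0$ for $m \geq 1$ (equivalently the $\lambda$-factorial recursion \eqref{lamfakrec} at $\lambda = 0$) to be what collapses $\Delta(s)$ to something with an obvious sign; getting the indexing of the telescoping exactly right, so that the ``single odd block'' really is the only leftover, is where the care is needed.
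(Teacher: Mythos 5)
Your overall architecture (reduce the dominance order to single ``move one box from a weakly smaller to a weakly larger block'' steps, with a single odd block as the only exception) matches the paper, but the engine you propose for the elementary step has a genuine gap. The key lemma you state --- that the difference can be written as $\sum_s W(s)\Delta(s)$ with $W(s)\geq 0$ and each $\Delta(s)$ of a definite sign --- is false already in the smallest nontrivial case. Compare $(2,2)$ with $(3,1)$ and no other blocks: there $W(s)=D_{4-s}$ and
\[
\Delta(s)=(-1)^s\sum_{b_1+b_2=s}\left[\binom{3}{b_1}\binom{1}{b_2}-\binom{2}{b_1}\binom{2}{b_2}\right]D_{b_1}D_{b_2},
\]
which gives $\Delta(2)=+1$, $\Delta(3)=-2$, $\Delta(4)=-1$: not sign-definite (the total is $0$, matching $|\Phi^{-1}_{(3,1)}(D(3,1))|=|\Phi^{-1}_{(2,2)}(D(2,2))|=12$). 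So all the content is hidden in your unspecified ``aggregated version'', and the recursions $D_m=(m-1)(D_{m-1}+D_{m-2})$ and $\sum_j(-1)^j\binom{m}{j}D_{m-j}$ do not by themselves collapse it; equality cases like the one above show that a correct argument must control global cancellation, which is exactly the hard part. (A minor slip besides: the exceptional move is $q=1$ with $p$ \emph{even}, i.e.\ target a single block of odd size $p+1$, not $p$ odd.) The paper's route is structurally different: instead of freezing the inclusion--exclusion indices of the other blocks, it fixes the set of $s$ elements coming from \emph{outside} the two blocks (a different parameter from your $b_p+b_q$), observes that the contribution of everything outside depends only on $a_1+a_2$, and is thereby reduced to the three-parameter count $F(a_1,a_2,s)=a_1!\,a_2!\,G(a_1,a_2,s)$; Lemma~\ref{lm:Grek} (via Theorem~\ref{thm:Genf} with $j=2$) gives a binomial formula for $G$, from which Pascal-type recurrences follow, and the inequality $F(a_1+1,a_2-1,s)\geq F(a_1,a_2,s)$ is proved by induction on these recurrences with explicit base cases ($G(a_1,1,0)=a_1/2$, $G(a_1,2,0)=((a_1+1)/2)^2$) and a separate treatment of $s=0$; the exception emerges precisely from the base case $a_2=1$, $a_1$ even. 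None of this inductive structure appears in your sketch.

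Your ``probably cleaner'' bijective alternative cannot be used as a fallback either: an explicit map realizing the elementary move ($\pi\in D(a_1,a_2,\ldots)\Rightarrow f(\pi)\in D(a_1+1,a_2-1,\ldots)$) is exactly what the authors pose as an open problem at the end of the paper. Your sketch (``delete the moved entry and reinsert it so as not to create a sorted fixed point in the enlarged block'') specifies neither the reinsertion rule nor why it is injective, and the mechanism of Lemma~\ref{lm:MjNj} controls fixed points only at the excedance/deficiency boundary of a single block, which does not obviously extend to a two-block rebalancing. So the proposal identifies the right reduction but proves neither of the two routes it offers for the crucial inequality.
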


Theorem~\ref{thm:correl} implies that, with only the trivial exception, the proportion of derangements among permutations with descending blocks $\vc{a}$ is minimal when $\vc{a} = (1,1,\dots,1)$, that is, when there are no prescribed descents.

The theorem will follow from a series of lemmata. The main point is proving that shifting any position from a smaller block
to a larger one almost never decreases the number $|\Phi^{-1}_{\vc{a}}(D(\vc{a}))|$. Equivalently, for fixed $a_3,\dots a_k$, and $a=a_1+a_2$ fixed, the function $|\Phi^{-1}(D(\vc{a}))|$ is unimodal in $a_1$ (with the trivial exception).

Let $F(a_1, a_2, s)$ be the number
of linear orders of the union of $a - s$ elements in $[a_1 + a_2]$ (regardless
which) and $[a+1, a+s]$, such that if these are sorted decreasingly in 
$[a_1, a_2]$, there is no fixed point. For instance, $F(3, 0, 1) = 2\cdot 3!$, 
counting all ways to scramble $431$ and $432$, since $421$ has a fixed point. 

The reason for counting these orders is that given $s$ elements from $[a+1, n]$
and $a - s$ elements from $[a]$ in blocks $a_1$ and $a_2$, the number
of ways to put the remaining elements in the remaining positions does not
depend on $a_1$ and $a_2$, but only on $a$. 

We also define the function $G(a_1, a_2, s)$ as the sum over $m$ of the number
of permutations in $\SymGroup{(a_1-m, a_2-(s-m), m, s-m)}$ such that
there are no fixed points in the first two blocks. The relation to 
$F(a_1, a_2, s)$ is the following.

\begin{lemma}
We have
\[
F(a_1, a_2, s) = a_1! a_2! G(a_1, a_2, s).
\]
\end{lemma}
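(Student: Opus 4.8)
The plan is to set up a bijection between the linear orders counted by $F(a_1, a_2, s)$ and the pairs consisting of a permutation counted by $G(a_1,a_2,s)$ together with an ordering of each of the first two blocks. Recall that $F(a_1,a_2,s)$ counts linear arrangements, in a block of length $a_1$ followed by a block of length $a_2$, of a multiset of size $a=a_1+a_2$ consisting of $a-s$ elements chosen from $[a_1+a_2]$ and the $s$ elements $[a+1,a+s]$, subject to the condition that sorting each block decreasingly produces no fixed point in positions $[1,a]$. Since the set of ``small'' elements (those in $[a]$) that appear is not fixed, the first move is to record, for a given such linear order, which positions carry a large element (one of $[a+1,a+s]$) and which carry a small one.

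First I would observe that whether the sorted arrangement has a fixed point depends only on the \emph{set} of elements in each block, not on their order within the block; hence the $a_1!\,a_2!$ orderings of the two blocks that induce the same pair of sets are either all counted by $F$ or none are. This already gives a factor $a_1!\,a_2!$, so it remains to show that the number of \emph{unordered} configurations — i.e. the number of ways to split the multiset into a set of size $a_1$ and a set of size $a_2$ with no fixed point after decreasing sorting — equals $G(a_1,a_2,s)$. Next I would decompose such a configuration by letting $m$ be the number of large elements placed in the first block, so $s-m$ large elements lie in the second block, $a_1-m$ small elements in the first and $a_2-(s-m)$ small elements in the second. The large elements, being all bigger than every position in $[1,a]$, are automatically excedances (or at worst contribute no fixed point), and they may be thought of as occupying separate singleton sub-blocks; the requirement ``no fixed point in positions $[1,a]$'' becomes exactly the condition defining membership in $\SymGroup{(a_1-m,\,a_2-(s-m),\,m,\,s-m)}$ with no fixed point in the first two blocks, because a permutation in $S_{\vc c}$ is determined by the set of values in each block, matching the ``choose which small elements go where'' data. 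Summing over $m$ reproduces the definition of $G(a_1,a_2,s)$, so $F(a_1,a_2,s)=a_1!\,a_2!\,G(a_1,a_2,s)$.

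The step I expect to require the most care is the identification of the ``no fixed point among the first $a$ positions after decreasing sorting'' condition with the combinatorial data of a permutation in $\SymGroup{(a_1-m,a_2-(s-m),m,s-m)}$ avoiding fixed points in its first two blocks: one must check that placing the large elements into their positions and reducing the remaining small elements does not create or destroy fixed points, that the block-length bookkeeping ($a_1-m$, etc.) is correct in all boundary cases (e.g. $m=0$, $m=a_1$, or $s-m$ exceeding $a_2$, which simply make the corresponding configuration empty), and that the correspondence between ``a permutation in $S_{\vc c}$'' and ``an assignment of which elements lie in which block'' is the bijection already noted in the paper (each fiber of $\Phi_{\vc c}$ determined by its block-sets). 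Once this dictionary is in place, the factor $a_1!\,a_2!$ coming from independently ordering the two blocks, and the sum over $m$, assemble directly into the claimed identity.
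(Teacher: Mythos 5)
Your overall strategy is the one the paper uses: factor out $a_1!\,a_2!$ by sorting the two blocks, then classify the sorted configurations by the number $m$ of large elements in the first block. But the key identification is asserted rather than proved, and as stated it is not correct. In a sorted configuration the $a_1-m$ small elements of the first block occupy positions $m+1,\dots,a_1$ (they sit \emph{behind} the $m$ large elements), and the small elements of the second block occupy positions $a_1+(s-m)+1,\dots,a$. So, after replacing the $s$ large elements by the $s$ unused elements of $[a]$ order-preservingly (a relabelling you also need and only gesture at), what you obtain position-for-position is a permutation in $\SymGroup{(m,\,a_1-m,\,s-m,\,a_2-(s-m))}$ with no fixed points in the \emph{second and fourth} blocks, not a permutation in $\SymGroup{(a_1-m,\,a_2-(s-m),\,m,\,s-m)}$ with no fixed points in the first two blocks. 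The two no-fixed-point requirements constrain the value sets at different positions (``$j$-th largest small element of the first block $\neq m+j$'' versus ``$\neq j$''), so they are genuinely different conditions on the block contents; the sentence claiming the requirement ``becomes exactly'' the condition defining $G$ is precisely where the argument has a hole.

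To close it you must show that the number of permutations with prescribed descending blocks and no fixed points in a prescribed subset of blocks is unchanged when the blocks (together with their constraints) are reordered, so that the two constrained blocks can be moved to the front and the composition matches the one in the definition of $G$. This is true but not free: it follows, for instance, from the generating function argument of Theorem~\ref{thm:Genf}, whose proof applies with constraints on an arbitrary set of blocks and yields a generating function symmetric under permuting constrained blocks among themselves and unconstrained ones among themselves; note that the paper lists finding a simple bijection realizing block reordering as an open problem, so this step cannot be waved through as a formality. Also, the verification you flag as the delicate one --- that ``placing the large elements into their positions and reducing the remaining small elements does not create or destroy fixed points'' --- would fail if carried out literally: deleting the large elements shifts the small elements leftwards and can create fixed points. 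The correct route is the position-respecting identification with $\SymGroup{(m,\,a_1-m,\,s-m,\,a_2-(s-m))}$ (where nothing moves, so fixed points are trivially preserved), followed by the block-reordering invariance just described; this is exactly the paper's ``rearranging the blocks'' step, which your write-up skips.
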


\begin{proof}
$F(a_1, a_2, s)$ gives $a_1! a_2!$ orders for every way to sort the elements
in two decreasing blocks of lengths $a_1$ and $a_2$. But then the initial
$m$ elements in the block $a_1$ will be larger than $a$ and can hence not
produce a fixed point, as for the first $s - m$ elements of the block $a_2$.
Thus, we could equally well take four decreasing sequences from $[a]$ of lengths
$(m, a_1-m, s-m, a_2-(s-m))$, not bothering about fixed points in the first
and third block. The statement follows by rearranging the blocks.
\end{proof}

\begin{lemma}\label{lm:Grek}
We have
\[
G(a_1, a_2, s) = \sum_{b_1, b_2\geq 0} (-1)^{b_1+b_2} \binom{a_1 + a_2 - b_1 - b_2}{s}
\binom{a_1 + a_2 - b_1 - b_2}{a_1 - b_1}.
\]
\end{lemma}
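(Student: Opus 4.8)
The plan is to start from the definition of $G(a_1,a_2,s)$ and apply Theorem~\ref{thm:Genf} blockwise, then collapse the resulting multinomial sums. Recall $G(a_1,a_2,s)=\sum_m |D_2(a_1-m,\,a_2-(s-m),\,m,\,s-m)|$, where $D_2$ counts permutations in $\SymGroup{(a_1-m,a_2-(s-m),m,s-m)}$ with no fixed points in the first two blocks. By Theorem~\ref{thm:Genf}, $|D_2(\vc{c})|$ with $\vc{c}=(a_1-m,a_2-(s-m),m,s-m)$ is the coefficient of $x_1^{a_1-m}x_2^{a_2-(s-m)}x_3^{m}x_4^{s-m}$ in $\frac{1}{(1+x_1)(1+x_2)(1-x_1-x_2-x_3-x_4)}$. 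First I would expand $\frac{1}{(1+x_1)(1+x_2)}=\sum_{b_1,b_2\ge0}(-1)^{b_1+b_2}x_1^{b_1}x_2^{b_2}$ and extract from $\frac{1}{1-(x_1+x_2+x_3+x_4)}$ the relevant multinomial coefficient, giving
\[
|D_2(\vc{c})|=\sum_{b_1,b_2\ge0}(-1)^{b_1+b_2}\binom{a_1+a_2-s-b_1-b_2+s}{a_1-m-b_1,\ a_2-s+m-b_2,\ m,\ s-m},
\]
where the top of the multinomial coefficient is the sum of its entries, $N:=a_1+a_2-b_1-b_2$.

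The key step is the summation over $m$. Summing $\binom{N}{a_1-m-b_1,\ a_2-s+m-b_2,\ m,\ s-m}$ over $m$ should be done by grouping the four parts into two pairs: $(a_1-m-b_1)$ with $m$ has total $a_1-b_1$, and $(a_2-s+m-b_2)$ with $s-m$ has total $a_2-s-b_2+s=a_2-b_2$. Writing the multinomial as $\binom{N}{a_1-b_1}\binom{a_1-b_1}{m}\binom{a_2-b_2}{s-m}$ (the first factor chooses which $a_1-b_1$ of the $N$ symbols go to the "first pair", then $m$ of those go to block~3, etc.), the sum over $m$ of $\binom{a_1-b_1}{m}\binom{a_2-b_2}{s-m}$ collapses by Vandermonde to $\binom{(a_1-b_1)+(a_2-b_2)}{s}=\binom{N}{s}$. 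This leaves exactly $\sum_{b_1,b_2\ge0}(-1)^{b_1+b_2}\binom{N}{s}\binom{N}{a_1-b_1}$ with $N=a_1+a_2-b_1-b_2$, which is the claimed formula.

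The main obstacle I anticipate is purely bookkeeping: making sure the exponents extracted from the generating function line up so that the four multinomial parts really are $a_1-m-b_1$, $a_2-s+m-b_2$, $m$, $s-m$, and checking that the implicit range restrictions (all parts $\ge0$, and $N-\binom{\cdot}{a_1-b_1}$ consistent) are automatically enforced by vanishing of binomial coefficients, so the sums can be left unrestricted as in the statement. One should also verify the degenerate cases (e.g.\ $s=0$, or $a_1=0$) to be safe, but these follow from the same identities. No step requires more than Vandermonde's convolution and the elementary factorisation of a $4$-part multinomial into nested binomials, so once the indexing is pinned down the computation is routine.
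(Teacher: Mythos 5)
Your proposal is correct and follows essentially the same route as the paper: apply Theorem~\ref{thm:Genf} with $j=2$ to each composition $(a_1-m,\,a_2-(s-m),\,m,\,s-m)$, expand $1/((1+x_1)(1+x_2))$ to get the signed multinomial sum, and collapse the sum over $m$ by Vandermonde. The only (immaterial) difference is the grouping: the paper extracts $\binom{N}{s}$ first and applies Vandermonde to $\sum_m\binom{s}{m}\binom{N-s}{a_1-m-b_1}=\binom{N}{a_1-b_1}$, while you extract $\binom{N}{a_1-b_1}$ first and obtain $\binom{N}{s}$ from $\sum_m\binom{a_1-b_1}{m}\binom{a_2-b_2}{s-m}$.
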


\begin{proof}
Using Theorem \ref{thm:Genf} with $j = 2$, we immediately get
\[
\begin{split}
G(a_1, a_2, s) &= \sum_{m, b_1, b_2} (-1)^{b_1 + b_2}\binom{a_1 + a_2 - b_1 - b_2}
{m, s-m, a_1 - m - b_1, a_2 - (s - m) - b_2} \\
&= \sum_{b_1, b_2} (-1)^{b_1 + b_2} \binom{a_1 + a_2 - b_1 - b_2}{s}
\sum_m \binom{s}{m} \binom{a_1 + a_2 - s - b_1 - b_2}{a_1 - m - b_1} \\
&= \sum_{b_1, b_2} (-1)^{b_1+b_2} \binom{a_1 + a_2 - b_1 - b_2}{s}
\binom{a_1 + a_2 - b_1 - b_2}{a_1 - b_1},
\end{split}
\] where all sums are taken over the nonnegative integers.
\end{proof}

We now wish to establish a recurrence, which by induction will show that the sequence $F(a_1,a-a_1,s)$ is unimodal with respect to $a_1$. We start by computing neccessary base cases.

\begin{lemma}
If $a_1$ is odd, we have $G(a_1, 0, 0)=0$ and if $a_1$ is even, $G(a_1, 0, 0)=1$. For all $a_1$ we have $G(a_1, 0, a_1) = 1$.
Moreover, $G(a_1, 1, 0) = a_1/2$ for even $a_1$ and 
$G(a_1, 2, 0) = ((a_1 + 1)/2)^2$ for odd $a_1$.
\end{lemma}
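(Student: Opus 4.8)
The plan is to derive all four base-case identities directly from the closed form of $G(a_1, a_2, s)$ in Lemma~\ref{lm:Grek}, substituting the relevant small values of $a_2$ and $s$ and then simplifying the resulting alternating binomial sum. Throughout, I will repeatedly use the convention that $\binom{p}{q}=0$ when $q<0$ or $q>p$, which keeps the sums finite.

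First, for $G(a_1,0,0)$ the formula gives $\sum_{b_1,b_2\geq 0}(-1)^{b_1+b_2}\binom{a_1-b_1-b_2}{0}\binom{a_1-b_1-b_2}{a_1-b_1}$. The first binomial is $1$ whenever $b_1+b_2\leq a_1$, and the second forces $b_2=0$ (since we need $a_1-b_1\leq a_1-b_1-b_2$, i.e. $b_2\leq 0$). So the sum collapses to $\sum_{b_1=0}^{a_1}(-1)^{b_1}$, which is $0$ for odd $a_1$ and $1$ for even $a_1$. For $G(a_1,0,a_1)$, the first binomial $\binom{a_1-b_1-b_2}{a_1}$ forces $b_1=b_2=0$, and the surviving term is $\binom{a_1}{a_1}\binom{a_1}{a_1}=1$.

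Next, for $G(a_1,1,0)$ with $a_1$ even, the formula reads $\sum_{b_1,b_2\geq 0}(-1)^{b_1+b_2}\binom{a_1+1-b_1-b_2}{0}\binom{a_1+1-b_1-b_2}{a_1-b_1}$; again the first factor is $1$ when $b_1+b_2\leq a_1+1$, and the second factor equals $1+b_2-b_1$ provided $0\leq a_1-b_1\leq a_1+1-b_1-b_2$, so the only contributing values are $b_2\in\{0,1\}$ (with $b_2=1$ requiring $b_1\leq a_1$ too, so the second binomial is just $1$ or $2-b_1$ restricted appropriately). Writing the $b_2=0$ part as $\sum_{b_1}(-1)^{b_1}\binom{a_1+1-b_1}{a_1-b_1}=\sum_{b_1}(-1)^{b_1}(1+b_1)$ over the valid range, and the $b_2=1$ part as $-\sum_{b_1}(-1)^{b_1}\binom{a_1-b_1}{a_1-b_1}=-\sum_{b_1}(-1)^{b_1}$, one gets telescoping/alternating sums that evaluate to $a_1/2$ for even $a_1$. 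The case $G(a_1,2,0)$ for odd $a_1$ proceeds the same way with $b_2\in\{0,1,2\}$, producing three alternating sums whose combination I expect to simplify to $((a_1+1)/2)^2$.

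I expect the routine algebra in the last two cases — keeping track of the ranges of validity of the binomial coefficients and correctly combining the three alternating sums — to be the only real obstacle; each individual sum is a standard alternating sum of a low-degree polynomial, evaluable either by pairing consecutive terms or by the finite-difference identity $\sum_{b}(-1)^b\binom{r}{b}p(b)=0$ for polynomials $p$ of degree less than $r$ together with explicit small corrections. As a sanity check I would verify $G(1,0,0)=0$, $G(2,0,0)=1$, $G(2,1,0)=1$, and $G(3,2,0)=4$ directly, and cross-check $G(3,0,1)$ against the stated value $F(3,0,1)=2\cdot 3!$ via the previous lemma $F=a_1!\,a_2!\,G$, which gives $G(3,0,1)=2$ — consistent with $G(a_1,0,\cdot)$ patterns. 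Once the four identities are verified, no further argument is needed, since the lemma is purely a computation of base cases.
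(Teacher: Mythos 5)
Your proposal is correct, but it takes a different route from the paper. You evaluate all four base cases uniformly from the closed-form alternating sum of Lemma~\ref{lm:Grek}, whereas the paper argues the first three combinatorially from the definition of $G$: the unique decreasing permutation of $[a_1]$ has a fixed point precisely when $a_1$ is odd (giving $G(a_1,0,0)$), $G(a_1,0,a_1)=1$ because fixed points are allowed in the last two parts, and $G(a_1,1,0)=a_1/2$ because such a permutation is determined by its last entry, which avoids a fixed point exactly when it lies in $(a_1/2,a_1]$; only for $G(a_1,2,0)$ does the paper invoke the formula, deriving the recursion $G(a_1,2,0)=\tfrac{a_1^2+a_1+2}{2}-G(a_1-1,2,0)$ and inducting. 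Your route buys uniformity and avoids case-specific bijective reasoning, at the cost of the combinatorial transparency of the first three cases; both are legitimate. I checked that your deferred ``routine algebra'' does go through: for $G(a_1,1,0)$ the $b_2=0$ and $b_2=1$ parts give $\tfrac{a_1+2}{2}-1=\tfrac{a_1}{2}$, and for $G(a_1,2,0)$ the three parts give $\tfrac{(a_1+1)(a_1+3)}{4}-\tfrac{a_1+1}{2}+0=\bigl(\tfrac{a_1+1}{2}\bigr)^2$. Two small corrections: termwise $\binom{a_1+1-b_1}{a_1-b_1}=a_1+1-b_1$, not $1+b_1$ (your value of the alternating sum is nevertheless right, since reversing the order of summation over $b_1\in[0,a_1]$ with $a_1$ even fixes the signs), and the parenthetical claims about the second binomial equalling ``$1+b_2-b_1$'' or ``$2-b_1$'' are garbled; what is true is that $\binom{a_1+a_2-b_1-b_2}{a_1-b_1}$ vanishes unless $b_1\le a_1$ and $b_2\le a_2$, which is the only structural fact you need, together with your stated convention on binomial coefficients (which is indeed the one implicit in the paper, since the sum arises from genuine multinomial coefficients).
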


\begin{proof}
The first assertion follows from the fact that permutations without 
ascents have a fixed point if and only they have an odd number of elements.
It is also clear that $G(a_1, 0, a_1) = 1$, since we allow fixed points
in the last two parts.


A permutation $\pi\in\SymGroup{(a_1,1)}$ is determined by its last
element, and if $a_1$ is even, it is easy to see that $\pi$ is 
fixed point free iff $a_1/2<\pi_{a_1+1}\leq a_1$. Hence we get $G(a_1, 1,
0) = a_1/2$. 

Further, if $a_1$ is odd we get
\[
\begin{split}
G(a_1, 2, 0) &= \frac{1}{2} \sum_{b_1 = 0}^{a_1} (-1)^{b_1} 
\left( (a_1 - b_1)^2 + (a_1 - b_1) + 2 \right) \\
&= \frac{a_1^2 + a_1 + 2}{2} + \frac{1}{2} \sum_{b_1 = 1}^{a_1} (-1)^{b_1} 
\left( (a_1 - b_1)^2 + (a_1 - b_1) + 2 \right) \\
&= \frac{a_1^2 + a_1 + 2}{2} - G(a_1-1, 2, 0),
\end{split}
\]
which gives, by induction and $G(1, 2, 0) = 1 = (2/2)^2$,
\[
\begin{split}
G(a_1, 2, 0) &= \frac{a_1^2 + a_1 + 2}{2} - \frac{(a_1 - 1)^2 + (a_1 -
  1) + 2}{2} + G(a_1-2, 2, 0) \\ 
&= a_1 + G(a_1 - 2, 2, 0) = a_1 + \left( \frac{a_1 - 1}{2} \right)^2
= \left( \frac{a_1 + 1}{2} \right)^2.
\end{split} 
\]
\end{proof}

The case $s = 0$ has to be treated separately. We start with proving unimodality
for this case, in the two following lemmas. 

\begin{lemma}\label{lm:s=0}
For $a_1, a_2 \geq 1$ and $s = 0$ we have 
\[
G(a_1, a_2, 0) = G(a_1-1, a_2, 0) + G(a_1, a_2-1, 0) + (-1)^{a_1+a_2}.
\]
\end{lemma}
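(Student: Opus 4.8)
The plan is to use the explicit formula from Lemma~\ref{lm:Grek},
\[
G(a_1, a_2, 0) = \sum_{b_1, b_2 \geq 0} (-1)^{b_1 + b_2} \binom{a_1 + a_2 - b_1 - b_2}{0} \binom{a_1 + a_2 - b_1 - b_2}{a_1 - b_1},
\]
and manipulate the double sum directly. Since $\binom{m}{0} = 1$ always, the formula simplifies to a sum of the binomials $\binom{a_1 + a_2 - b_1 - b_2}{a_1 - b_1}$ with signs. First I would reindex, collecting terms by the value $t = b_1 + b_2$, so that $G(a_1,a_2,0) = \sum_{t \geq 0} (-1)^t \sum_{b_1 = 0}^{t} \binom{a_1 + a_2 - t}{a_1 - b_1}$. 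The inner sum $\sum_{b_1=0}^t \binom{a_1+a_2-t}{a_1-b_1}$ is a sum of $t+1$ consecutive binomial coefficients $\binom{a_1+a_2-t}{a_1-t}, \ldots, \binom{a_1+a_2-t}{a_1}$.

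The key step is then to compare this against the analogous sums for $G(a_1-1, a_2, 0)$ and $G(a_1, a_2-1, 0)$, and to apply Pascal's rule $\binom{N}{r} = \binom{N-1}{r} + \binom{N-1}{r-1}$ term by term inside the sums. The telescoping should leave exactly one surviving "boundary" binomial coefficient at each level $t$, and after resumming over $t$ the residual should collapse — via the standard alternating identity $\sum_{t} (-1)^t \binom{M-t}{r}$ type cancellation — to the single term $(-1)^{a_1 + a_2}$. A cleaner route, which I would try first, is to set $H(a_1, a_2) = G(a_1, a_2, 0) - G(a_1 - 1, a_2, 0) - G(a_1, a_2 - 1, 0)$, substitute the Lemma~\ref{lm:Grek} expression into all three terms, and apply Pascal's rule to the binomial $\binom{a_1+a_2-b_1-b_2}{a_1-b_1}$ in the first term; the two pieces should match, up to reindexing $b_1 \mapsto b_1 - 1$ or $b_2 \mapsto b_2 - 1$, the summands of the other two terms, except for the extreme values of $b_1$ and $b_2$ where the reindexing runs off the edge of the summation range. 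Tracking those edge terms carefully is what produces $(-1)^{a_1+a_2}$.

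Alternatively, one can avoid the binomial bookkeeping entirely by arguing combinatorially on $F(a_1, a_2, 0)$, recalling that $F(a_1,a_2,0)/(a_1! a_2!) = G(a_1,a_2,0)$ counts linear orders on $[a_1+a_2]$ that, when sorted decreasingly in two blocks of sizes $a_1$ and $a_2$, are fixed-point free; conditioning on whether the position $a_1$ (the last slot of the first block) or position $a_1 + a_2$ receives a particular extreme value gives a bijective recursion, with the $(-1)^{a_1+a_2}$ term accounting for the single degenerate configuration (the fully decreasing arrangement, which is fixed-point free iff $a_1+a_2$ is even). The main obstacle either way is the careful handling of the boundary of the summation range: it is easy to get the recursion up to sign or up to an off-by-one term, and pinning down exactly the $(-1)^{a_1+a_2}$ correction — in particular checking it against small cases such as the base values computed in the previous lemma — is where the real care is needed.
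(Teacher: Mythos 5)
Your ``cleaner route'' is exactly the paper's proof: specialize Lemma~\ref{lm:Grek} to $s=0$, observe that for each $(b_1,b_2)$ the summands $\binom{a_1-1+a_2-b_1-b_2}{a_1-1-b_1}$ and $\binom{a_1+a_2-1-b_1-b_2}{a_1-b_1}$ of $G(a_1-1,a_2,0)$ and $G(a_1,a_2-1,0)$ combine by Pascal's rule into the summand $\binom{a_1+a_2-b_1-b_2}{a_1-b_1}$ of $G(a_1,a_2,0)$ (so in fact no reindexing is needed), and the single exception is the boundary term $(b_1,b_2)=(a_1,a_2)$, where $\binom{0}{0}=1$ survives only in $G(a_1,a_2,0)$ and yields the correction $(-1)^{a_1+a_2}$. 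This is precisely the termwise Pascal argument and boundary-term bookkeeping in the paper's own proof, so your plan is correct and essentially identical to it.
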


\begin{proof}
When $s=0$, Lemma \ref{lm:Grek} reduces to 
\[
G(a_1, a_2, 0) = \sum_{b_1, b_2\geq 0} (-1)^{b_1+b_2}
\binom{a_1 + a_2 - b_1 - b_2}{a_1 - b_1},
\]
which gives
\[
\begin{split}
& G(a_1-1, a_2, 0) + G(a_1, a_2-1, 0) \\
&= \sum_{b_1, b_2\geq 0} (-1)^{b_1+b_2}
\left[\binom{a_1 - 1 + a_2 - b_1 - b_2}{a_1 - 1 - b_1} + 
\binom{a_1 + a_2 - 1 - b_1 - b_2}{a_1 - b_1}\right] \\
&= \sum_{b_1, b_2\geq 0} (-1)^{b_1+b_2}
\binom{a_1 + a_2 - b_1 - b_2}{a_1 - b_1} - (-1)^{a_1+a_2}.
\end{split}
\]
The last term corresponds to $b_1=a_1$, $b_2=a_2$, which gives a term in the second sum but not in the first one.
\end{proof}

\begin{lemma}
For $a_1 \geq a_2 \geq 1$ and $s = 0$ we have
\[
F(a_1+1, a_2-1, 0) \geq F(a_1, a_2, 0),
\]
unless $a_1$ is even and $a_2 = 1$.
\end{lemma}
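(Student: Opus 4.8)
The plan is to prove this by expressing the claimed inequality in terms of the quantities $G$, which have a combinatorial recurrence from Lemma~\ref{lm:s=0}, and then to use that recurrence to reduce everything to the explicitly computed base cases $G(a_1,0,0)$, $G(a_1,1,0)$ and $G(a_1,2,0)$. Since $F(a_1,a_2,0) = a_1!\,a_2!\,G(a_1,a_2,0)$ by the first lemma of this section, the desired inequality $F(a_1+1,a_2-1,0)\geq F(a_1,a_2,0)$ is equivalent to
\[
(a_1+1)!\,(a_2-1)!\,G(a_1+1,a_2-1,0)\ \geq\ a_1!\,a_2!\,G(a_1,a_2,0),
\]
that is, $(a_1+1)\,G(a_1+1,a_2-1,0)\geq a_2\,G(a_1,a_2,0)$. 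So the first step is to record this reformulation and observe that it suffices to prove this cleaner inequality about $G$.

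Next I would set up an induction on $a_2$ (for fixed $a = a_1+a_2$, or more precisely an induction that decreases $a_2$ toward the base cases $a_2 = 1$ and $a_2 = 2$, which were computed in the preceding lemma). The tool is Lemma~\ref{lm:s=0}: $G(a_1,a_2,0) = G(a_1-1,a_2,0) + G(a_1,a_2-1,0) + (-1)^{a_1+a_2}$. Using this recurrence twice — once to expand $G(a_1+1,a_2-1,0)$ and once to expand $G(a_1,a_2,0)$ — I can rewrite the target inequality entirely in terms of $G$-values with the second argument reduced to $a_2-1$ or $a_2-2$, plus explicit sign terms. The aim is to show that the discrete "second difference" $G(a_1+1,a_2-1,0) - G(a_1,a_2,0)$, suitably weighted by the factorial ratio, stays nonnegative; concretely one wants a statement like $G(a_1+1,a_2-1,0) - G(a_1,a_2,0) = G(a_1,a_2-1,0) - G(a_1-1,a_2,0)$ modulo sign corrections (telescoping the recurrence), which lets the induction close by relating step $(a_1,a_2)$ to step $(a_1-1,a_2)$ or to a case with smaller $a_2$.

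The base cases are where the explicit formulas come in. For $a_2 = 1$ with $a_1$ even, $G(a_1,1,0) = a_1/2$ while on the other side we need $G(a_1+1,0,0)$, which is $0$ since $a_1+1$ is odd — this is exactly the excluded case "$a_1$ even and $a_2=1$", and here the inequality genuinely fails, since $F(a_1+1,0,0) = (a_1+1)!\cdot 0 = 0 < a_1!\cdot(a_1/2) = F(a_1,1,0)$. For $a_1$ odd and $a_2 = 1$, $G(a_1+1,0,0) = 1$ (as $a_1+1$ is even) and we compare with $G(a_1,1,0)$; but $a_1$ odd means the formula $G(a_1,1,0)=a_1/2$ does not apply, and instead one uses $G(a_1,1,0) = G(a_1-1,1,0) + G(a_1,0,0) + (-1)^{a_1+1} = (a_1-1)/2 + 0 + 1 = (a_1+1)/2$, so the inequality $(a_1+1)\cdot 1 \geq 1\cdot (a_1+1)/2$ holds. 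The case $a_2 = 2$ uses $G(a_1,2,0) = ((a_1+1)/2)^2$ for odd $a_1$ together with the recurrence to handle even $a_1$, and one checks the weighted inequality directly.

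The main obstacle I anticipate is bookkeeping the parity-dependent sign terms $(-1)^{a_1+a_2}$ through the double application of the recurrence, and making sure the induction is organized so that the excluded exceptional case ($a_1$ even, $a_2=1$) is the \emph{only} place positivity can fail — one must verify that once $a_2\geq 2$, or once $a_1$ is odd, the sign corrections never swamp the inductively-guaranteed nonnegative main term. A secondary subtlety is that $G$-values can be negative (e.g. $G(a_1,0,0)$ alternates between $0$ and $1$, and more complicated $G(a_1,a_2,0)$ need not be monotone in each argument individually), so the argument must work with the specific weighted combination $(a_1+1)G(a_1+1,a_2-1,0) - a_2 G(a_1,a_2,0)$ rather than trying to compare $G$-values termwise; I would keep that weighted quantity as the induction invariant throughout.
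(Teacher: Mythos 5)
Your overall route is the same as the paper's: convert the statement via $F(a_1,a_2,0)=a_1!\,a_2!\,G(a_1,a_2,0)$, use the $s=0$ recurrence of Lemma~\ref{lm:s=0}, and induct, with the base cases $a_2=1$ and $a_2=2$ settled by the explicit values of $G(a_1,0,0)$, $G(a_1,1,0)$ and $G(a_1,2,0)$. Your base-case work is correct (including $G(a_1,1,0)=(a_1+1)/2$ for odd $a_1$ and the identification of the excluded case $a_1$ even, $a_2=1$ as the genuine exception).

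The gap is precisely at the step you defer, and it is not mere bookkeeping. Writing $\Delta(a_1,a_2)=F(a_1+1,a_2-1,0)-F(a_1,a_2,0)$ and applying the recurrence to both terms (valid for $a_2\geq 2$), the exact identity is
\[
\Delta(a_1,a_2)=a_1\,\Delta(a_1-1,a_2)+(a_2-1)\,\Delta(a_1,a_2-1)+(-1)^{a_1+a_2}\,a_1!\,(a_2-1)!\,(a_1+1-a_2),
\]
so for $a_1+a_2$ odd there is a strictly negative correction term. (Your unweighted telescoping guess is also slightly off: the sign terms cancel and one gets $G(a_1+1,a_2-1,0)-G(a_1,a_2,0)=G(a_1+1,a_2-2,0)-G(a_1-1,a_2,0)$, not the expression you wrote; but the real issue is the weighted version above.) Crucially, nonnegativity of the smaller $\Delta$'s --- which is all your induction invariant supplies --- cannot absorb that negative term, because for odd $a_1+a_2$ the inequality degenerates to equality: for instance $F(4,1,0)=F(3,2,0)=48$ and $F(5,2,0)=F(4,3,0)=2160$, so in the identity the main terms exactly cancel the correction, and any slack lost by only knowing $\Delta\geq 0$ breaks the argument. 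To close the induction you need a sharper invariant (e.g.\ an exact evaluation or a quantitative lower bound for $\Delta(a_1,a_2)$ strong enough to dominate $a_1!\,(a_2-1)!\,(a_1+1-a_2)$ in the odd case), not the qualitative hope that ``the sign corrections never swamp the main term.'' For what it is worth, the paper's own displayed identity \eqref{theEquationAbove} silently drops this correction term, so your instinct that the parity terms are the crux is sound; but since your proposal flags the obstacle without resolving it, it does not yet constitute a proof.
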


\begin{proof}
We wish to show that $F(a_1+1, a_2-1, 0) - F(a_1, a_2, 0) \geq 0$. For 
$a_2 = 1$, this is clearly true for odd $a_1$, but not for even $a_1$.
More generally, we get by Lemma \ref{lm:s=0} that 
\begin{equation} \label{theEquationAbove}
\begin{split}
F(a_1+1, a_2-1, 0) - F(a_1, a_2, 0) 
&= (a_1+1) F(a_1, a_2-1, 0) + (a_2-1) F(a_1+1, a_2-2, 0) \\
&\qquad - a_1 F(a_1-1, a_2, 0) - a_2 F(a_1, a_2-1, 0) \\
&= a_1 (F(a_1, a_2-1, s) - F(a_1-1, a_2, s)) \\
&\qquad + (a_2 - 1) (F(a_1+1, a_2-2, s) - F(a_1, a_2-1, s)),  
\end{split}
\end{equation}
which is non-negative by induction for $a_2 \geq 3$, and for $a_2 = 2$
with odd $a_1$. Thus, what remains is the case $a_2 = 2$ with even $a_1$.
Equation \eqref{theEquationAbove} then specialises to
\[
\begin{split}
F(a_1+1, 1, 0) - F(a_1, 2, 0) &= (a_1 - 1) F(a_1, 1, 0) - a_1 F(a_1-1, 2, 0) \\
&= (a_1-1) a_1! \frac{a_1}{2} - a_1 (a_1-1)! \left(\frac{a_1}{2} \right)^2 \\
&= a_1! \frac{a_1}{4} (2a_1 - 2 - a_1) = a_1! \frac{a_1}{4} (a_1 - 2),
\end{split}
\]
which is non-negative for $a_1 \geq 2$.
\end{proof}

We can now proceed with the case $s\geq 1$.

\begin{lemma}
For $a_1 \geq 1$, $a_2 \geq 0$ and $s \geq 1$ we have
\[
G(a_1, a_2, s) = G(a_1-1, a_2, s) + G(a_1-1, a_2, s-1) + G(a_1, a_2-1, s)
+ G(a_1, a_2-1, s-1).
\]
\end{lemma}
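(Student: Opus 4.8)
The plan is to prove the four-term recurrence
\[
G(a_1, a_2, s) = G(a_1-1, a_2, s) + G(a_1-1, a_2, s-1) + G(a_1, a_2-1, s) + G(a_1, a_2-1, s-1)
\]
by direct manipulation of the closed form supplied by Lemma~\ref{lm:Grek}. Recall that
\[
G(a_1, a_2, s) = \sum_{b_1, b_2 \geq 0} (-1)^{b_1+b_2} \binom{a_1+a_2-b_1-b_2}{s} \binom{a_1+a_2-b_1-b_2}{a_1-b_1},
\]
so each of the five terms in the claimed identity is a double sum over $b_1, b_2 \geq 0$ with the same sign $(-1)^{b_1+b_2}$; it therefore suffices to verify the identity summand by summand, i.e.\ to show that for every nonnegative integer $N$ (playing the role of $a_1+a_2-b_1-b_2$) and appropriate shifts, the relevant binomial products telescope correctly.

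The key algebraic fact I would use is Pascal's rule applied to \emph{both} binomial factors simultaneously. Writing $N = a_1+a_2-b_1-b_2$ and $t = a_1 - b_1$, one has $\binom{N}{s}\binom{N}{t}$ as the term in $G(a_1,a_2,s)$. Decreasing $a_1$ by one decreases $N$ by one and $t$ by one; decreasing $a_2$ by one decreases $N$ by one but leaves $t$ unchanged; decreasing $s$ by one decreases the upper-left index target. Applying Pascal's rule $\binom{N}{s} = \binom{N-1}{s} + \binom{N-1}{s-1}$ and $\binom{N}{t} = \binom{N-1}{t-1} + \binom{N-1}{t}$ and multiplying, we get
\[
\binom{N}{s}\binom{N}{t} = \binom{N-1}{s}\binom{N-1}{t} + \binom{N-1}{s}\binom{N-1}{t-1} + \binom{N-1}{s-1}\binom{N-1}{t} + \binom{N-1}{s-1}\binom{N-1}{t-1},
\]
and the four resulting products are exactly the generic terms of $G(a_1, a_2-1, s)$, $G(a_1-1, a_2, s)$, $G(a_1, a_2-1, s-1)$, $G(a_1-1, a_2, s-1)$ respectively (matching $N-1$ to the new block-sum and $t$ or $t-1$ to the new value of first-block target). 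Summing against $(-1)^{b_1+b_2}$ over all $b_1, b_2 \geq 0$ and observing that the same index set $\{b_1, b_2 \geq 0\}$ governs every term, the identity follows.

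The main obstacle is bookkeeping at the boundary of the summation range: the closed form in Lemma~\ref{lm:Grek} is written as an unrestricted sum over $b_1, b_2 \geq 0$, with binomial coefficients vanishing automatically when indices go out of range, but Pascal's rule for $\binom{N-1}{t-1}$ introduces a term that, as a summand of $G(a_1-1, a_2, s)$, formally corresponds to a shifted index $b_1' = b_1$ while $a_1' = a_1 - 1$ so that $t' = a_1' - b_1' = t - 1$ — I must check that no spurious terms are created or lost when $b_1 = 0$ or when $N = 0$ (here the hypotheses $a_1 \geq 1$, $s \geq 1$ are exactly what keeps the shifted binomials meaningful, e.g.\ $\binom{N-1}{s-1}$ with $s \geq 1$ is a genuine binomial coefficient). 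Once the summand-wise identity and the coincidence of index sets are both confirmed, the lemma is immediate; I expect the entire argument to be short, with the only care needed being the precise reindexing that identifies each of the four Pascal pieces with the correct one of the four $G$-terms on the right-hand side.
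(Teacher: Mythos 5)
Your proposal is correct and is essentially the paper's own argument: the paper also works termwise with the closed form of Lemma~\ref{lm:Grek}, writing $c_i=a_i-b_i$ and collapsing $\bigl[\binom{c_1+c_2-1}{s}+\binom{c_1+c_2-1}{s-1}\bigr]\bigl[\binom{c_1+c_2-1}{c_1-1}+\binom{c_1+c_2-1}{c_1}\bigr]$ into $\binom{c_1+c_2}{s}\binom{c_1+c_2}{c_1}$ via Pascal's rule, which is exactly your expansion read in the reverse direction. Your identification of the four Pascal pieces with the four shifted $G$-terms, and your observation that $s\geq 1$ kills the boundary contribution (the term that produces the $(-1)^{a_1+a_2}$ correction when $s=0$), match the paper's treatment.
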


\begin{proof}

By Lemma \ref{lm:Grek}, and writing $c_i:=a_i-b_i$, we get
\[
\begin{split}
& G(a_1-1, a_2, s) + G(a_1-1, a_2, s-1) + G(a_1, a_2-1, s)
+ G(a_1, a_2-1, s-1)\\
&= \sum_{\vc{0}\leq\vc{b}} (-1)^{b_1+b_2}\left[\binom{c_1+c_2-1}{s} + \binom{c_1+c_2-1}{s-1}\right]\left[\binom{c_1+c_2-1}{c_1-1}+\binom{c_1+c_2-1}{c_1}\right] \\
&= \sum_{\vc{0}\leq\vc{b}}(-1)^{b_1+b_2}\binom{c_1+c_2}{s}\binom{c_1+c_2}{c_1} = G(a_1, a_2, s).
\end{split}
\]
\end{proof}

\begin{lemma}
For $a_1 \geq a_2 \geq 0$ and $s \geq 1$ we have
\[
F(a_1+1, a_2-1, s) \geq F(a_1, a_2, s).
\]
\end{lemma}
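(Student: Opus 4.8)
The plan is to mimic the structure of the preceding lemma (the $s=0$ case of the same inequality), using the four-term recurrence for $G(a_1,a_2,s)$ with $s\ge 1$ together with the identity $F(a_1,a_2,s)=a_1!\,a_2!\,G(a_1,a_2,s)$ to turn the statement into an induction on $a_1+a_2$. Converting the $G$-recurrence into an $F$-recurrence, one has $F(a_1,a_2,s)=a_1!a_2!\big(G(a_1-1,a_2,s)+G(a_1-1,a_2,s-1)+G(a_1,a_2-1,s)+G(a_1,a_2-1,s-1)\big)$, i.e. $F(a_1,a_2,s)=a_1\,F(a_1-1,a_2,s)+a_1 a_2!\,(a_1-1)!\,G(a_1-1,a_2,s-1)+a_2\,F(a_1,a_2-1,s)+a_1!a_2\,(a_2-1)!\,G(a_1,a_2-1,s-1)$; it will be cleaner to keep everything in terms of $F$ by noting $a_1!a_2!\,G(a_1-1,a_2,s-1)=a_1\,F(a_1-1,a_2,s-1)$ and similarly for the other cross term. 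So the working identity is
\[
F(a_1,a_2,s)=a_1 F(a_1-1,a_2,s)+a_1 F(a_1-1,a_2,s-1)+a_2 F(a_1,a_2-1,s)+a_2 F(a_1,a_2-1,s-1).
\]

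Next I would write down the same identity with $(a_1,a_2)$ replaced by $(a_1+1,a_2-1)$ and subtract, exactly as in equation \eqref{theEquationAbove}. After collecting terms, the difference $F(a_1+1,a_2-1,s)-F(a_1,a_2,s)$ should express as a nonnegative combination of differences of the form $F(a_1',a_2'-1,s')-F(a_1'-1,a_2',s')$ with $a_1'+a_2'<a_1+a_2$ and $a_1'\ge a_2'$, for the two values $s'=s$ and $s'=s-1$. Each such difference is $\ge 0$ by the induction hypothesis once we check that the reduced parameters still satisfy the hypotheses of the lemma — in particular that we never land in the excluded case ``$a_2$ even and... '' Wait: for $s\ge 1$ there is no exclusion at all, so the induction is clean provided every smaller instance invoked also has $s'\ge 1$; the terms with $s'=s-1$ could hit $s'=0$, and there the previous lemma (the $s=0$ result) applies, but it carries the exception ``$a_1$ even, $a_2=1$''. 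So the genuine work is to show that whenever the recurrence produces an $s'=0$ term $F(a_1'+1,a_2'-1,0)-F(a_1',a_2',0)$ in the bad case $a_1'$ even, $a_2'=1$, that term's negative contribution is dominated by the other (strictly positive) contributions.

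The main obstacle, then, is exactly this boundary bookkeeping at $s=1$, where the recurrence drags in $s=0$ differences that may individually be negative. I would handle it by treating $s=1$ as a base case of the induction on $s$: expand $F(a_1+1,a_2-1,1)-F(a_1,a_2,1)$ all the way down to a combination of the explicitly computed quantities $G(a_1,0,0)$, $G(a_1,0,a_1)$, $G(a_1,1,0)$, $G(a_1,2,0)$ (from the base-case lemma) plus $s=0$ differences already proved nonnegative except in the one bad case, and verify nonnegativity by the same kind of elementary factorial manipulation used at the end of the $s=0$ lemma (where $F(a_1+1,1,0)-F(a_1,2,0)=a_1!\,\tfrac{a_1}{4}(a_1-2)\ge0$). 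For $s\ge 2$ the induction on $a_1+a_2$ (with the induction on $s$ already in hand for smaller $s$) goes through with every invoked difference automatically satisfying the hypotheses, so no further case analysis is needed there. One should also record at the outset the trivial boundary values ($F(a_1,a_2,s)$ with $a_2=0$, or with $s$ equal to $a_1+a_2$, where all entries are "free" and the difference is $0$ or clearly nonnegative), so that the recursion terminates correctly.
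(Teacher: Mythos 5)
Your plan is the same as the paper's: the four-term recurrence for $G$, multiplied by $a_1!\,a_2!$ to give $F(a_1,a_2,s)=a_1F(a_1-1,a_2,s)+a_1F(a_1-1,a_2,s-1)+a_2F(a_1,a_2-1,s)+a_2F(a_1,a_2-1,s-1)$, then subtraction and induction on $a_1+a_2$. The one organisational difference is that the paper keeps the two levels bundled, setting $H(a_1,a_2,s)=F(a_1,a_2,s)+F(a_1,a_2,s-1)$ and writing $F(a_1+1,a_2-1,s)-F(a_1,a_2,s)=a_1\bigl(H(a_1,a_2-1,s)-H(a_1-1,a_2,s)\bigr)+(a_2-1)\bigl(H(a_1+1,a_2-2,s)-H(a_1,a_2-1,s)\bigr)$, whereas you split each bracket into an $s$-difference plus an $(s-1)$-difference and appeal to the two lemmas separately.

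With that split, the obstacle you name is real, and your write-up stops exactly there: for $s=1$ the recursion drags in the $s'=0$ differences $F(a_1,a_2-1,0)-F(a_1-1,a_2,0)$ and $F(a_1+1,a_2-2,0)-F(a_1,a_2-1,0)$, which in the exceptional cases ($a_1-1$ even with $a_2=1$, respectively $a_1$ even with $a_2=2$) are strictly negative, and you only promise to dominate them ``by the same kind of elementary factorial manipulation'' without carrying it out. That deferred step is the substance of the proof, not routine bookkeeping: in these boundary cases the final inequality holds with \emph{equality} (for instance $F(4,0,1)=F(3,1,1)=48$ and $F(3,1,1)=F(2,2,1)=48$), so there is no slack for a crude domination estimate; one must exhibit an exact compensation between the negative $s'=0$ bracket and the corresponding positive $s'=1$ bracket, e.g.\ $F(a_1,0,1)-F(a_1-1,1,1)$ against $F(a_1,0,0)-F(a_1-1,1,0)$ for odd $a_1$. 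The cleaner repair is to strengthen the induction hypothesis to the bundled quantity, proving $H(a_1+1,a_2-1,s)\geq H(a_1,a_2,s)$ (this is what the paper's grouping into $H$ is pointing at, and even there the $s=1$ bottom of the induction deserves an explicit check), or else to actually do the $s=1$ computation you postpone. Two smaller points: when $a_1=a_2$ the difference $F(a_1,a_2-1,s')-F(a_1-1,a_2,s')$ is not an instance of the induction hypothesis (its parameters have $a_1'<a_2'$); it vanishes by the symmetry $F(a_1,a_2,s)=F(a_2,a_1,s)$, visible from Lemma~\ref{lm:Grek}, and this should be said. Likewise the boundary values you mention at the end ($a_2'=0$, or $s'$ exhausting a block) should be pinned down before the recursion is trusted to terminate.
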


\begin{proof}
The previous lemma translates to 
\[
F(a_1, a_2, s) = a_1(F(a_1-1, a_2, s) + F(a_1-1, a_2, s-1)) + a_2(F(a_1, a_2-1, s)
+ F(a_1, a_2-1, s-1)).
\]
Thus, with $H(a_1, a_2, s) = F(a_1, a_2, s) + F(a_1, a_2, s-1)$ for shorthand,
we get 
\[
\begin{split}
F(a_1+1, a_2-1, s) - F(a_1, a_2, s) &= 
(a_1+1) H(a_1, a_2-1, s) + (a_2 - 1) H(a_1+1, a_2-2, s) \\
&\qquad - a_1 H(a_1-1, a_2, s) - a_2 H(a_1, a_2-1, s) \\
&= a_1 (H(a_1, a_2-1, s) - H(a_1-1, a_2, s)) \\
&\qquad + (a_2 - 1) (H(a_1+1, a_2-2, s) - H(a_1, a_2-1, s)),
\end{split}
\]
which is non-negative by induction. 
\end{proof}

\begin{proof} [Proof of Theorem~\ref{thm:correl}]
Ignoring the case with only one block of odd length, fix a choice of $s$ elements from $[a+1,n]$ to be placed in the first two blocks. We have shown that more of these permutations become derangements when sorted in $(a_1+1, a_2-1, \vc{a'})$, than when sorted in $(a_1,a_2,\vc{a'})$. Summing over all such choices of $s$ elements, we get that  \[
|\Phi^{-1}_{\vc{(a_1+1,a_2-1,\vc{a'})}}(D(a_1+1,a_2-1,\vc{a'}))| \geq |\Phi^{-1}_{\vc{(a_1,a_2,\vc{a'})}}(D(a_1,a_2,\vc{a'}))|,
\] 
when $a_1\geq a_2\geq 1$, unless $({a_1,a_2,\vc{a'})} = (2m,1,\vc{0})$.

Since $|D(\vc{a})|$ is invariant under reordering the blocks, it follows that $|\Phi^{-1}_{\vc{a}} D(\vc{a})|$ increases when moving positions from smaller to larger blocks. This completes the proof of Theorem \ref{thm:correl}. 
\end{proof}



\section{Euler's difference tables fixed point coloured} \label{sec: Euler}

Leonard Euler introduced the integer table $(e^k_n)_{0 \leq k \leq n}$
by defining $e^n_n = n!$ and $e^{k-1}_n = e^k_n - e^{k-1}_{n-1}$ for
$1 \leq k \leq n$. Apparently, he never gave a combinatorial interpretation,
but a simple one is this: $e^k_n$ gives the number of permutations $\pi \in \Sn$
such that there are no fixed points on the last $n-k$ positions. Thus, 
$e^0_n = D_n$. 

It is clear from the recurrence that $k!$ divides $e^k_n$. Thus, we can
define the integers $d^k_n = e^k_n/k!$. These have recently been studied
by Fanja Rakotondrajao \cite{Rak2007},
and the combinatorial interpretation of $d^k_n$ given there was that
they count the number of permutations $\pi \in \Sn$ such that there are no
fixed points on the last $n-k$ positions and such that the first $k$ elements
are all in different cycles.

We will now generalise these integer tables to any number $\lambda$ of
fixed point colours, give a combinatorial interpretation that is
more in line with the context of this article, and bijectively prove
the generalised versions of the relations in \cite{Rak2007}.

Let $e^k_n(\lambda)$ be defined by $e^n_n(\lambda) = n!$ and
$e^{k-1}_n(\lambda) = e^k_n(\lambda) + (\lambda - 1)
e^{k-1}_{n-1}(\lambda)$. Then, a natural combinatorial interpretation for
non-negative integer $\lambda$ is that $e^k_n(\lambda)$ count the
number of permutations $\pi \in \Sn$ such that fixed points on the
last $n-k$ positions may be coloured in any one of $\lambda$ colours.

Similarly, we can define $d^k_n(\lambda) = e^k_n(\lambda)/k!$ and
interpret these numbers as counting the number of permutations
$\pi \in \SymGroup{(k, 1, 1, \ldots, 1)} \subseteq \Sn$ such that
fixed points on the last $n-k$ positions may be coloured in
any one of $\lambda$ colours. The set of these permutations
is denoted $D^k_n(\lambda)$. Thus, our intepretation for $\lambda = 0$
states that apart from forbidding fixed points at the end, we
also demand that the first $k$ elements are in descending order.
Equivalently, we could have considered permutations ending with $k-1$
ascents and having $\lambda$ fixed point colours in the first $n-k$
positions, to be closer to the setting in \cite{CorGesSavWil2007}.

There are a couple of relations that we can prove bijectively
with this interpretation, generalising the results with $\lambda = 0$
from \cite{Rak2007}. In our proofs, we will use the following
conventions. If $\lambda$ is a positive integer, and $(\pi,C)$ is a
permutation with a colouring of some of its fixed points, we will call
the colour $1$ the default colour. Fixed points $i$ with $C(i)>1$ will
be called \emph{essential fixed points}.  

When fixed points are deleted and inserted using the maps $\phi_{F+i}$
and $\psi_F$, they keep their colour. So for example, if $2$ is
coloured red in $\pi=321$, then so is the fixed point $3$ in
$\phi_{F+1}\circ\psi_F(\pi)=213$. Fixed points that are inserted but
not explicitly coloured, will be assumed to have the default colour. 

\begin{proposition} \label{prop:1}
For integers $1 \leq k \leq n$ and $\lambda \in \mathbb{C}$ we have
\[
d^{k-1}_n(\lambda) = k d^k_n(\lambda) + (\lambda - 1)
d^{k-1}_{n-1}(\lambda). 
\]
\end{proposition}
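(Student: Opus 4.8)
The plan is to prove the recurrence $d^{k-1}_n(\lambda) = k\, d^k_n(\lambda) + (\lambda-1)\, d^{k-1}_{n-1}(\lambda)$ combinatorially by partitioning $D^{k-1}_n(\lambda)$ according to whether position $k$ is a fixed point, and analysing each part via the insertion/deletion maps $\phi$ and $\psi$. Recall that $D^{k-1}_n(\lambda)$ consists of permutations $(\pi,C)$ with $\pi \in \SymGroup{(k-1,1,1,\dots,1)}$ (so $\pi_1 > \pi_2 > \dots > \pi_{k-1}$) with no fixed points in positions $k,\dots,n$, where the fixed points among positions $1,\dots,k-1$ may be coloured in $\lambda$ colours. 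We want to match these, up to the factor accounting for the block size, with $D^k_n(\lambda)$ and $D^{k-1}_{n-1}(\lambda)$.

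First I would split $D^{k-1}_n(\lambda)$ into two classes. In the first class, position $k$ is \emph{not} a fixed point of $\pi$. Then I claim these are in bijection with a $k$-fold cover of $D^k_n(\lambda)$: given $\sigma \in D^k_n(\lambda)$ (which has $\sigma_1 > \dots > \sigma_k$ and no fixed point after position $k$), one can insert $\sigma_k$ --- or more precisely, pick any one of the $k$ positions $1,\dots,k$ at which to place the smallest of the first $k$ values --- no, more carefully: the fiber of the sorting-type operation that takes $\pi \in \SymGroup{(k-1,1,\dots,1)}$ with $\pi_k$ not fixed and sorts the first $k$ entries decreasingly lands in $\SymGroup{(k,1,\dots,1)}$, and this sorting map is $k$-to-$1$ precisely because the first $k$ entries can be arranged in $k$ ways that keep $\pi_1 > \dots > \pi_{k-1}$ while $\pi_k$ is the one entry allowed to float; the condition ``$\pi_k$ not a fixed point'' is automatically preserved/needed so that the sorted image genuinely has no fixed point in positions $k,\dots,n$. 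Colours are carried along unchanged. This contributes the term $k\, d^k_n(\lambda)$. In the second class, position $k$ \emph{is} a fixed point of $\pi$; since $\pi \in \SymGroup{(k-1,1,\dots,1)}$ the entries $\pi_1,\dots,\pi_{k-1}$ are decreasing, $\pi_k = k$ is a fixed point lying in its own singleton block, and it may be coloured in $\lambda$ colours. Deleting it with $\psi_k$ gives a permutation in $\SymGroup{(k-1,1,\dots,1)}$ on $n-1$ elements with no fixed points in the last $n-1-(k-1)$ positions, i.e. an element of $D^{k-1}_{n-1}(\lambda)$, but one must be careful: $\psi_k$ might create a new fixed point among positions $1,\dots,k-1$. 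Here one uses the descending-block structure together with the essential/default colour convention described just before the proposition, as in \cite{Rak2007}: the entry $k$ is removed, the $k-1$ larger entries above it drop by one; whether a fixed point appears must be tracked, and the colour bookkeeping (default vs. essential fixed point) is exactly what makes the count come out to $(\lambda-1)\, d^{k-1}_{n-1}(\lambda)$ rather than $\lambda\, d^{k-1}_{n-1}(\lambda)$.

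Alternatively, and more cleanly, I would prove the recurrence directly from the defining recursion $e^{k-1}_n(\lambda) = e^k_n(\lambda) + (\lambda-1) e^{k-1}_{n-1}(\lambda)$ together with $d^k_n(\lambda) = e^k_n(\lambda)/k!$: dividing by $(k-1)!$ gives $d^{k-1}_n(\lambda) = e^k_n(\lambda)/(k-1)! + (\lambda-1) d^{k-1}_{n-1}(\lambda) = k\, d^k_n(\lambda) + (\lambda-1) d^{k-1}_{n-1}(\lambda)$, which is the claim, provided one has first verified that $k!$ divides $e^k_n(\lambda)$ so that $d^k_n(\lambda)$ is well-defined and the combinatorial interpretation of $D^k_n(\lambda)$ is correct. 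Since the problem statement already supplies $d^k_n(\lambda) = e^k_n(\lambda)/k!$ and the interpretation of $D^k_n(\lambda)$ as a definition, the purely algebraic derivation is essentially immediate; the content worth spelling out is the combinatorial version, which makes the recurrence transparent and sets up the bijective style used for the remaining relations generalising \cite{Rak2007}.

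The main obstacle I anticipate is the fixed-point-creation bookkeeping in the second class: when $\psi_k$ deletes the fixed point at position $k$, entries that were excedances of $\pi$ sitting above the diagonal may descend onto it, and one must argue precisely why the net effect, after accounting for colours (an inserted/retained fixed point taking the default colour versus an essential one), produces the factor $(\lambda - 1)$. This is the same subtlety that forces the ``$-1$'' in $|D_n| = n|D_{n-1}| + (-1)^n$ and in \eqref{lamfakrec}, now localised to a single block; I would handle it by carefully distinguishing the sub-case where $\pi_{k-1} = k+1$ (so that after deletion a fixed point is forced at position $k-1$ in the block, whose colour is constrained) from the generic sub-case, mirroring the argument in the cited reference and the colour conventions stated immediately before the proposition.
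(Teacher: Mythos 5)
Your fallback algebraic argument is correct and complete as a proof of the stated identity: the paper \emph{defines} $e^k_n(\lambda)$ by $e^{k-1}_n(\lambda)=e^k_n(\lambda)+(\lambda-1)e^{k-1}_{n-1}(\lambda)$ and sets $d^k_n(\lambda)=e^k_n(\lambda)/k!$, so dividing the recurrence by $(k-1)!$ gives the proposition at once, for all $\lambda\in\mathbb{C}$ (no divisibility verification is needed for the identity itself; integrality only matters for the combinatorial interpretation). This is, however, a genuinely different route from the paper's. The purpose of the section is to prove these relations \emph{bijectively} through the interpretation $d^k_n(\lambda)=|D^k_n(\lambda)|$ for integer $\lambda\geq 1$, extending to $\mathbb{C}$ by polynomiality; the paper's proof exhibits a bijection $\theta\colon\bigl([k]\times D^k_n(\lambda)\bigr)\cup\bigl([2,\lambda]\times D^{k-1}_{n-1}(\lambda)\bigr)\rightarrow D^{k-1}_n(\lambda)$, with $\theta(j,\pi)=\phi_{k,\pi_j}\circ\psi_j(\pi)$ (move the entry at position $j\leq k$ to position $k$) and $\theta(c,\pi)=\phi_k(\pi)$ with $C(k)=c$ (insert an essential fixed point of colour $c\in[2,\lambda]$). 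The algebraic route buys brevity; the bijection buys the combinatorial content the section is after.

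Your primary combinatorial sketch, which is the one closest to the paper, has a genuine gap, and the obstacle you anticipate is not where the difficulty lies. Splitting $D^{k-1}_n(\lambda)$ by whether $\pi_k=k$ makes the sorting map on the first class \emph{not} uniformly $k$-to-$1$: over a $\sigma\in D^k_n(\lambda)$ whose first block contains the value $k$, exactly one of the $k$ fiber elements has a fixed point at position $k$ and so falls outside your first class, which therefore has size $k\,d^k_n(\lambda)$ minus a correction term. Meanwhile your second class is cleanly in bijection with $[\lambda]\times D^{k-1}_{n-1}(\lambda)$: contrary to your worry, $\psi_k$ cannot create a fixed point in positions $1,\dots,k-1$ (that would force $\sigma_i=i+1>k$ with $i<k$), and coloured fixed points beyond position $k$ simply shift and keep their colours (note also that for $\lambda>0$ elements of $D^{k-1}_{n-1}(\lambda)$ are allowed coloured fixed points in the tail; they are not fixed-point free as you wrote). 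So the whole content of the recurrence is the unproved claim that the correction term equals $d^{k-1}_{n-1}(\lambda)$, i.e.\ that the permutations in $D^k_n(\lambda)$ containing the value $k$ in the first block are equinumerous with $D^{k-1}_{n-1}(\lambda)$; this is precisely the $\lambda$ versus $\lambda-1$ discrepancy you flagged, and the sub-case $\pi_{k-1}=k+1$ you propose to analyse does not touch it. The paper sidesteps the problem by partitioning instead on whether position $k$ is an \emph{essential} (non-default-coloured) fixed point: then ``remove the entry at position $k$ and re-sort its value into the first block'' is exactly $k$-to-$1$ onto $D^k_n(\lambda)$, a default-coloured fixed point at $k$ being allowed in that class with no information lost, while essential fixed points at $k$ give the $(\lambda-1)\,d^{k-1}_{n-1}(\lambda)$ term.
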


\begin{proof}
Assume $\lambda \geq 1$ is an integer. The left hand side counts the
elements in $D^{k-1}_n(\lambda)$ and the 
right hand side the elements in $\left([k] \times D^k_n(\lambda)\right) \cup 
\left([2, \lambda] \times D^{k-1}_{n-1}(\lambda)\right)$. We will give a bijection
$\theta: \left([k] \times D^k_n(\lambda)\right) \cup \left([2, \lambda] \times
D^{k-1}_{n-1}(\lambda)\right) \rightarrow D^{k-1}_n(\lambda)$, thereby
proving these sets to be equinumerous.  

For $(\pi, C) \in D^k_n(\lambda)$, $j\in[k]$, let $\theta(j, \pi) = \phi_{k, \pi_j}\circ
\psi_j(\pi)$, which takes out $\pi_j$ and inserts it at position $k$. For
$\pi \in D^{k-1}_{n-1}(\lambda)$, $c \in [2, \lambda]$, let $\theta(c, \pi) = \phi_k(\pi)$
and $C(k) = c$, that is we insert an essential fixed point $k$, coloured $c$. 
Now, $\theta$ is clearly invertible, and thus a bijection.
Since $d^k_n(\lambda)$ is a polynomial in $\lambda$ and the equation
holds for all integer $\lambda \geq 1$, it clearly holds for all
$\lambda \in \mathbb{C}$.
\end{proof}

\begin{proposition} \label{prop:2}
For integers $0 \leq k \leq n - 1$ and $\lambda \in \mathbb{C}$ we have
\[
d^k_n(\lambda) = n d^k_{n-1}(\lambda) + (\lambda - 1)
d^{k-1}_{n-2}(\lambda). 
\]
\end{proposition}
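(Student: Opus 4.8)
The statement to prove is the recurrence
$d^k_n(\lambda) = n\, d^k_{n-1}(\lambda) + (\lambda - 1)\, d^{k-1}_{n-2}(\lambda)$
for $0 \le k \le n-1$. As in Proposition~\ref{prop:1}, I would first reduce to the case where $\lambda \ge 1$ is an integer, since both sides are polynomials in $\lambda$ and agreement on the positive integers forces agreement on all of $\mathbb{C}$. Then the task becomes a bijective one: exhibit a bijection
\[
\theta: D^k_n(\lambda) \longrightarrow \left([n] \times D^k_{n-1}(\lambda)\right) \cup \left([2,\lambda] \times D^{k-1}_{n-2}(\lambda)\right).
\]
Recall that $D^k_n(\lambda)$ consists of permutations $\pi \in \SymGroup{(k,1,1,\dots,1)}$ (so the first $k$ entries are decreasing) with no fixed points on the last $n-k$ positions, each remaining fixed point (which must lie in the first $k$ positions) coloured in one of $\lambda$ colours.

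\textbf{Key steps.} The natural move is to look at position $n$, the last position, which can never be a fixed point. Write $\pi \in D^k_n(\lambda)$. The value $\pi_n$ is some element that is not equal to $n$ (if $\pi_n = n$ it would be a forbidden fixed point at position $n$, assuming $k \le n-1$). I would distinguish cases according to what happens when we remove position $n$ via $\psi_n$. The generic case: after deleting the last entry with $\psi_n$, we get a permutation of $[n-1]$; I must check it still lies in $\SymGroup{(k,1,\dots,1)}$ — removing the last position does not touch the initial descending block of length $k$ (since $k \le n-1$), so this is automatic — and that it has no new fixed points on the forbidden positions. The only subtlety is whether $\psi_n$ creates a fixed point at position $n-1$. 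If it does, that fixed point sits at position $n-1$, which is among the last $n-k$ forbidden positions, so $\psi_n(\pi) \notin D^k_{n-1}(\lambda)$; this is precisely the exceptional branch that must be absorbed by the $[2,\lambda] \times D^{k-1}_{n-2}(\lambda)$ term. In the generic branch, $\psi_n(\pi) \in D^k_{n-1}(\lambda)$, and to recover $\pi$ we must remember where to reinsert and what value; this is the $n$ choices recorded by the $[n]$ factor (the value $\pi_n$ ranges over roughly $n-1$ possibilities, but reconciling the count with exactly $n$ requires care — likely the bookkeeping is: first factor records $\pi_n$ via $\phi_{n,\pi_n}$, and the apparent off-by-one is consumed by the exceptional cases). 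For the exceptional branch, when $\psi_n$ would create a fixed point at $n-1$, I would instead delete \emph{two} positions and an essential colour, landing in $D^{k-1}_{n-2}(\lambda)$ with a colour from $[2,\lambda]$ recording the essential fixed point removed — mirroring the structure of the second term exactly. The map is invertible by reinsertion (the maps $\phi_{n,j}$, $\phi_n$, $\phi_k$ with colour bookkeeping), so it is a bijection.

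\textbf{Main obstacle.} The delicate point is getting the case analysis and the counting to line up cleanly — in particular, correctly identifying which configurations of $\pi_n$ and $\pi_{n-1}$ force the exceptional branch, and verifying that the $[n]$ factor (rather than $[n-1]$) is the right index set once all cases are pooled. A clean way to organise this is probably to split $D^k_n(\lambda)$ as $\{\pi : \pi_n \ne n-1 \text{ or } \pi_{n-1} \ne n\}$ versus the complement, map the first part to $[n-1] \times D^k_{n-1}(\lambda)$-ish data, and handle position $n-1$ being an essential coloured fixed point after deletion separately, collecting the leftover into $\{n\} \times D^k_{n-1}(\lambda) \sqcup [2,\lambda] \times D^{k-1}_{n-2}(\lambda)$. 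I would also double-check the boundary cases $k=0$ (initial block empty, so $D^0_n(\lambda) = \Sn^\lambda$, and the recurrence should reduce to \eqref{lamfakrec} combined with $d^{-1}$ suitably interpreted) and small $n$, since the $d^{k-1}_{n-2}$ term with $k \le n-1$ keeps all indices in range but the edge behaviour is where a bijective argument is easiest to get subtly wrong.
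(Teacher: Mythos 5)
Your proposal is based on a misreading of the set $D^k_n(\lambda)$, and this is fatal rather than cosmetic. In the paper, $d^k_n(\lambda)$ counts permutations in $\SymGroup{(k,1,\dots,1)}$ whose fixed points \emph{on the last $n-k$ positions} are each given one of $\lambda$ colours (so such fixed points are allowed when $\lambda\geq 1$, and forbidden only at $\lambda=0$); a fixed point inside the initial decreasing block carries no colour. You state the opposite: fixed points forbidden on the last $n-k$ positions and $\lambda$-coloured fixed points in the first $k$ positions. With your set the identity is simply false: for $k=1$, $n=3$, $\lambda=0$ your set is $\{231,312\}$ of size $2$, whereas $d^1_3(0)=3$ (the permutations $132,231,312$, a fixed point at position $1$ being allowed). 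So the bijection you set out to build cannot exist between the sets you name, and in particular your claim that $\pi_n\neq n$ always holds is wrong for the correct objects, where a coloured fixed point at position $n$ is one of the configurations the $\lambda$-dependence must account for.

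Even after correcting the definition, the core of your plan — delete position $n$ via $\psi_n$ — hides the real difficulties, which you flag but do not resolve. Deleting the last entry can create \emph{several} new fixed points among the coloured positions (e.g.\ removing the final $1$ from $2341$ creates three), and there is no data in your target $[n]\times D^k_{n-1}(\lambda)$ to record colours for them; likewise the "$[n]$ versus $n-1$ values of $\pi_n$" mismatch and the entire exceptional branch into $[2,\lambda]\times D^{k-1}_{n-2}(\lambda)$ are left as hopes rather than constructions. The paper's proof avoids all of this by working at the \emph{other} end: the map $\eta$ inserts a new element $j$ at position $k+1$ (immediately after the descending block), conjugated by removing and re-inserting the essential (non-default-coloured) fixed points via $\psi_F$ and $\phi_F$ so they are not disturbed; this covers exactly those elements of $D^k_n(\lambda)$ whose run of essential fixed points starting at $k+1$ is empty or followed by a weak excedance, and the remaining elements are produced from $D^{k-1}_{n-2}(\lambda)$ by inserting an essential fixed point at $k+1$ together with a smaller element at $k+2$ determined by re-sorting the first block. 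Your sketch would need both the corrected interpretation and an honest treatment of the colour bookkeeping and case split before it could be considered a proof.
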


\begin{proof}
Assume $\lambda \geq 1$ is an integer. We seek a bijection 
\\$\eta: \left([n] \times D^k_{n-1}(\lambda)\right) \cup \left([2, \lambda] \times
D^{k-1}_{n-2}(\lambda)\right) \rightarrow D^k_n(\lambda)$, thereby
proving these sets to be equinumerous.  

For $(\pi, C) \in D^k_{n-1}(\lambda)$, $j\in[n]$, let $F_j = \{i \in F_\pi | C(i) > 1, 
i < j\}$ be the essential fixed points in $\pi$ less than $j$. Then, $\eta(j, \pi) = \phi_F\circ\phi_{k+1,
j-|F|}\circ\psi_F(\pi)$, which inserts the element $j$ as soon as possible after position $k$, without disturbing the essential fixed 
points. This accounts for all $(\pi, C) \in  D^k_n(\lambda)$ where the segment of essential fixed points starting at $k+1$ is
either empty or is followed by an element above or on the diagonal. 

To map to the rest, we take $(\pi, C) \in D^{k-1}_{n-2}(\lambda)$ and let
$F = \{j \in F_\pi | C(j) > 1\}$ be the essential fixed points in
$\pi$. Further, let $F+2 = \{f + 2| f \in F\}$ and let 
$m$ be the last element in 
$\psi_{[k+1, n]}(\pi)$, the reduced permutation of the
first $k$ elements in $\pi$. For $c\in[2, \lambda]$, we set $\eta(c, \pi) = 
\phi_{F+2}\circ\phi_{k+1}\circ\phi_{k+1, m}\circ\Phi_{(k, 1, \ldots,
  1)}\circ\psi_F(\pi)$ and $C(k+1) = c$. In words, we   
insert a fixed point $k+1$ with a non-default 
colour $c$, a smaller number $m$ at position $k+2$, and sort $\pi_k$ into the initial decreasing sequence, while
maintaining the positions of the fixed points, relative to the right border of
the permutation. The map may look anything but injective since we use
$\Phi_{(k, 1, \ldots, 1)}$, but since $m$ is deducable from $\eta(c,
\pi)$, the map really is injective. This gives all $(\pi, C) \in
D^k_n(\lambda)$ where the segment of essential fixed
points starting at $k+1$ is followed by an element below the
diagonal. Hence, we are done.
\end{proof}

\begin{proposition} \label{prop:3}
For integers $0 \leq k \leq n - 1$ and $\lambda \in \mathbb{C}$ we have
\[
d^k_n(\lambda) = \left(n + (\lambda - 1)\right) d^k_{n-1}(\lambda) - (\lambda -
1)\left(n - k - 1\right) d^k_{n-2}(\lambda). 
\]
\end{proposition}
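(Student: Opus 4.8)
The plan is to derive Proposition~\ref{prop:3} from the two previous propositions by eliminating the ``mixed'' term $d^{k-1}_{n-1}(\lambda)$ that appears in both, rather than by giving a direct bijective argument. This is the natural route because Proposition~\ref{prop:1} and Proposition~\ref{prop:2} are already available and both feature a $d^{k-1}$-term with coefficient $(\lambda-1)$, which is exactly the term absent from the identity we want to prove. Since every $d^k_n(\lambda)$ is a polynomial in $\lambda$, it again suffices to verify the identity on the positive integers, or indeed we may simply manipulate the recurrences formally.

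First I would record the two inputs in a convenient form. From Proposition~\ref{prop:2}, applied with $n$ replaced by $n$, we have
\[
(\lambda-1)\,d^{k-1}_{n-2}(\lambda) = d^k_n(\lambda) - n\,d^k_{n-1}(\lambda).
\]
From Proposition~\ref{prop:1} with $n$ replaced by $n-1$, we have
\[
d^{k-1}_{n-1}(\lambda) = k\,d^k_{n-1}(\lambda) + (\lambda-1)\,d^{k-1}_{n-2}(\lambda),
\]
so that $(\lambda-1)\,d^{k-1}_{n-2}(\lambda) = d^{k-1}_{n-1}(\lambda) - k\,d^k_{n-1}(\lambda)$. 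Combining the two displayed equalities gives
\[
d^k_n(\lambda) - n\,d^k_{n-1}(\lambda) = d^{k-1}_{n-1}(\lambda) - k\,d^k_{n-1}(\lambda),
\]
i.e.\ $d^k_n(\lambda) = (n-k)\,d^k_{n-1}(\lambda) + d^{k-1}_{n-1}(\lambda)$. This is a clean ``first-order'' recurrence, and it is the key intermediate identity; one could also prove it directly and quickly with the same colouring interpretation (either a fixed point in position $n$, coloured in $\lambda$ ways but recombined, or an element below the diagonal there), but obtaining it from the earlier propositions keeps the argument self-contained.

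Next I would use this intermediate recurrence twice to express both $d^{k-1}_{n-1}(\lambda)$ and the relevant $d^{k-1}$-shifted terms in terms of $d^k$'s. Shift $n\mapsto n-1$ in the intermediate identity to get $d^k_{n-1}(\lambda) = (n-1-k)\,d^k_{n-2}(\lambda) + d^{k-1}_{n-2}(\lambda)$, hence $d^{k-1}_{n-2}(\lambda) = d^k_{n-1}(\lambda) - (n-1-k)\,d^k_{n-2}(\lambda)$. Now substitute this expression for $(\lambda-1)\,d^{k-1}_{n-2}(\lambda)$ back into Proposition~\ref{prop:2}:
\[
d^k_n(\lambda) = n\,d^k_{n-1}(\lambda) + (\lambda-1)\bigl(d^k_{n-1}(\lambda) - (n-k-1)\,d^k_{n-2}(\lambda)\bigr) = \bigl(n+(\lambda-1)\bigr)d^k_{n-1}(\lambda) - (\lambda-1)(n-k-1)\,d^k_{n-2}(\lambda),
\]
which is exactly the claimed identity.

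The only real obstacle is bookkeeping: one must make sure the index shifts ($n\mapsto n-1$, $k\mapsto k-1$) are applied to the versions of Propositions~\ref{prop:1} and~\ref{prop:2} whose hypotheses $1\le k\le n$ and $0\le k\le n-1$ actually hold in the range $0\le k\le n-1$ we are proving, and to double-check the boundary cases $k=0$ and $n-k-1=0$ (where the last term vanishes and the recurrence degenerates, consistent with $e^0_n = D_n$ and the derangement recursion). Apart from that, the derivation is a short linear elimination, so no step should be genuinely hard; the main thing to be careful about is not to conflate $d^{k-1}_{n-1}$ with $d^{k-1}_{n-2}$ when chaining the substitutions.
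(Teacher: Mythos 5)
Your derivation is correct in substance but follows a genuinely different route from the paper. The paper proves Proposition~\ref{prop:3} directly and bijectively: it builds a surjection $\zeta_1$ onto $D^k_n(\lambda)$ whose multiplicities account for $\left(n+(\lambda-1)\right)d^k_{n-1}(\lambda)$, identifies exactly which permutations are hit twice, and matches those with the image of a second map $\zeta_2$, which accounts for the subtracted term $(\lambda-1)(n-k-1)d^k_{n-2}(\lambda)$. You instead eliminate the mixed terms from Propositions~\ref{prop:1} and~\ref{prop:2}; since those are proved earlier and independently of Proposition~\ref{prop:3}, there is no circularity, and your intermediate identity $d^k_n(\lambda)=(n-k)\,d^k_{n-1}(\lambda)+d^{k-1}_{n-1}(\lambda)$ is itself a pleasant by-product (it is the $\lambda$-analogue of a known relation for the $d^k_n$). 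What the paper's proof buys is a uniform combinatorial explanation of the identity (and it works for all $0\le k\le n-1$ at once); what yours buys is brevity, at the cost of inheriting the hypotheses of the inputs.

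That cost is where the one real gap sits: your chain uses Proposition~\ref{prop:1} at $(k,n-1)$ and at $(k,n-2)$, which requires $k\ge 1$, and the shifted intermediate identity requires $k\le n-2$; so the elimination as written proves the statement only for $1\le k\le n-2$. Your closing remark does not repair the endpoints: for $k=0$ the last term does \emph{not} vanish, and ``consistency with the derangement recursion'' only addresses $\lambda=0$, whereas the claim is for all $\lambda$. Both endpoint cases must be (and easily can be) done separately. For $k=n-1$ one has $d^{n-1}_{n-1}(\lambda)=1$ and $d^{n-1}_n(\lambda)=\bigl(n!+(\lambda-1)(n-1)!\bigr)/(n-1)!=n+\lambda-1$, and the term $(n-k-1)d^k_{n-2}(\lambda)$ is zero (indeed $d^{n-1}_{n-2}$ is not even defined, so one needs this zero-coefficient convention, just as the paper implicitly does since $[k+2,n]=\emptyset$ there). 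For $k=0$ use $d^0_m(\lambda)=f_\lambda(m)$ together with \eqref{lamfakrec}:
\[
(n+\lambda-1)f_\lambda(n-1)-(\lambda-1)(n-1)f_\lambda(n-2)
= n f_\lambda(n-1)+(\lambda-1)\bigl(f_\lambda(n-1)-(n-1)f_\lambda(n-2)\bigr)
= n f_\lambda(n-1)+(\lambda-1)^{n}=f_\lambda(n).
\]
With these two checks added, your argument is a complete and valid alternative proof.
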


\begin{proof}
Assume $\lambda \geq 1$ is an integer. We will give a map 
\[
\zeta_1: \left(\left(\left\{(j, 1) | j \in [n]\right\} \cup \left\{(k+1, c) | c \in [2, \lambda]\right\}\right)
\times D^k_{n-1}(\lambda)\right) \rightarrow D^k_n(\lambda)
\]
which is surjective, but give some permutations twice. These
permutations will also be given once by 
\[
\zeta_2: \left([2, \lambda] \times [k + 2, n] \times D^k_{n-2}(\lambda)\right)
\rightarrow D^k_n(\lambda),
\]
thereby proving the proposition.

For $\pi \in D^k_{n-1}(\lambda)$, let $F_j = \{i \in F_\pi | C(i) > 1,
i < j\}$ be the essential fixed points less than $j$ in $\pi$. Then,
$\zeta_1(j, c, \pi) = \phi_{F_j}\circ\phi_{k+1, j-|F_j|}\circ\psi_{F_j}(\pi)$,
which inserts the element $j$ as soon as possible after position $k$, without disturbing the
coloured fixed points. If $j = k+1$, we let $C(k+1) = c$.

The permutations given twice are those where the segment $F$ of essential 
fixed points starting at $k+1$ is non-empty, and followed by
an element above or on the diagonal. For $\pi \in D^k_{n-2}(\lambda)$, these
are given by applying $\zeta_2(c, j, \pi) = \phi_F\circ\phi_{k+2,
j}\circ\phi_{k+1}\circ\psi_F(\pi)$ and $C(k+1) = c$, that is we insert a fixed
point $k+1$ with a non-default colour $c$ followed by a default
coloured element $j$ larger than $k+1$. 
\end{proof}
 
These formulae allow us to once again deduce the recursion for the $\lambda$-factorials. Using Proposition \ref{prop:3} extended to $k = -1$ and $d^{-1}_{-1}(\lambda) = 1$, we get by induction $d^{-1}_n = (\lambda - 1)d^{-1}_{n-1}$ and hence $d^{-1}_n = (\lambda - 1)^{n+1}$. Thus, by Proposition~\ref{prop:2} we have $f_\lambda(n) = d^0_n = n d^0_{n-1} + (\lambda - 1)^n$.
We can also use Proposition \ref{prop:3} to obtain, using (\ref{lamfakrec}),
\[
\begin{split}
f_\lambda\left(n\right) &= (n + \lambda - 1)f_\lambda\left(n-1\right) - (\lambda - 1)
(n - 1)f_\lambda\left(n-2\right)\\
&= (n-1)\left(f_\lambda\left(n-1\right) + f_\lambda\left(n-2\right)\right) + \lambda\left(f_\lambda\left(n-1\right) - (n-1)f_\lambda\left(n-2\right)\right) \\
&= (n-1)\left(f_\lambda\left(n-1\right) + f_\lambda\left(n-2\right)\right) + \lambda (\lambda - 1)^{n-1},
\end{split}
\]
which specialises to the well-known $$D_n = (n - 1)(D_{n-1}
+ D_{n-2})$$ and $n! = (n-1)((n-1)! + (n-2)!)$. 

\begin{example}

We consider the set $D^2_4(2)$, using bold face for the second
colour. In the table below, we give the permutations which are in
bijection with those in $D^2_4(2)$ via $\theta$ and $\eta$, and those
being mapped there by $\zeta_1$. At the star, both $(4, 1, 21\col{3})$
and $(3, 2, 213)$ are mapped to $21\col{3}4$ by $\zeta_1$, as is $(2,
4, 21)$ by $\zeta_2$. 

\begin{center}

\begin{tabular}{|c|c|c|c||c|c|c|c|}
\hline
$D^2_4(2)$ & $\theta$ & $\eta$ & $\zeta_1$ & $D^2_4(2)$ & $\theta$ & $\eta$ & $\zeta_1$ \\
\hline
$2134$       & $(1, 3214)$       & $(3, 213)$       & $(3, 1, 213)$       &    
$4123$       & $(2, 4213)$       & $(2, 312)$       & $(2, 1, 312)$       \\
$21\col{3}4$ & $(2, 213)$        & $(4, 21\col{3})$ & $\star$             &
$4132$       & $(2, 4312)$       & $(3, 312)$       & $(3, 1, 312)$       \\
$213\col{4}$ & $(1, 321\col{4})$ & $(3, 21\col{3})$ & $(3, 1, 21\col{3})$ &
$41\col{3}2$ & $(2, 312)$        & $(2, 12)$        & $(3, 2, 312)$       \\
$21\col{34}$ & $(2, 21\col{3})$  & $(2, 1\col{2})$  & $(3, 2, 21\col{3})$ &
$4213$       & $(3, 4213)$       & $(1, 312)$       & $(1, 1, 312)$       \\
$2143$       & $(1, 4213)$       & $(4, 213)$       & $(4, 1, 213)$       &
$4231$       & $(2, 4321)$       & $(3, 321)$       & $(3, 1, 321)$       \\
$3124$       & $(2, 3214)$       & $(2, 213)$       & $(2, 1, 213)$       &
$42\col{3}1$ & $(2, 321)$        & $(2, 21)$        & $(3, 2, 321)$       \\
$312\col{4}$ & $(2, 321\col{4})$ & $(2, 21\col{3})$ & $(2, 1, 21\col{3})$ &
$4312$       & $(3, 4312)$       & $(1, 321)$       & $(1, 1, 321)$       \\
$3142$       & $(1, 4312)$       & $(4, 312)$       & $(4, 1, 312)$       &
$4321$       & $(3, 4321)$       & $(2, 321)$       & $(2, 1, 321)$       \\
$3214$       & $(3, 3214)$       & $(1, 213)$       & $(1, 1, 213)$       &&&&\\
$321\col{4}$ & $(3, 321\col{4})$ & $(1, 21\col{3})$ & $(1, 1, 21\col{3})$ &&&&\\
$3241$       & $(1, 4321)$       & $(4, 321)$       & $(4, 1, 321)$       &&&&\\
\hline
\end{tabular}

\end{center}

To further examplify the trickiest parts, consider $\eta(1, 542361)$ for
$k = 4$ and $n = 8$. Keeping only the first $k$ elements in $542361$ we
get $4312$ and thus $m = 2$. Returning to $542361$, we sort the first
$k$ elements into $543261$, insert $m$ at position $k+1$, giving
$6543261$, and then the fixed point $k+1$ with colour $2$, giving
$7643\col{5}281$. For the inverse procedure, remove $\col{5}$ and $m =
2$, giving $543261$, and then move the element at position $k - m + 1
= 3$ to position $k = 4$.

\end{example}

We close this section by noting that Lemma \ref{lm:lamfakid} can be
generalised to $d^k_n(\lambda)$ as follows. The proof is completely
analogous. 

\begin{proposition}
For $\nu, \lambda \in \mathbb{C}$ and $0 \leq k \leq n \in \mathbb N$,
we have 
\[
d^k_{n}(\nu) = \sum_j \binom{n-k}{j} d^k_{n - j}(\lambda) (\nu - \lambda)^j.
\]
\end{proposition}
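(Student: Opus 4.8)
The plan is to mimic the proof of Lemma \ref{lm:lamfakid}, replacing ``fixed point $\lambda$-coloured permutations in $\Sn$'' with ``permutations in $D^k_n(\lambda)$'', i.e.\ permutations in $\SymGroup{(k,1,\dots,1)}$ whose fixed points on the last $n-k$ positions are coloured in $\lambda$ colours. The key observation that makes the argument go through is that the combinatorial structure being coloured — the fixed points — all lie among the $n-k$ positions after the initial descending block, and that such fixed points can be freely recoloured independently of the rest of the permutation. Since $d^k_n(\lambda)$ is a polynomial in $\lambda$ (and, implicitly, in $\nu$) for each fixed pair $0\le k\le n$, it suffices to establish the identity for $\nu,\lambda$ positive integers with $k\le n\in\mathbb N$; the general complex case then follows by the usual polynomial-identity argument.

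Concretely, I would split into the three cases $\nu=\lambda$, $\nu>\lambda$, and $\nu<\lambda$ exactly as in Lemma \ref{lm:lamfakid}. When $\nu=\lambda$ every term with $j\ge 1$ vanishes and the identity is trivial. When $\nu>\lambda$, take $(\pi,C)\in D^k_n(\nu)$ and let $j$ be the number of fixed points (necessarily among positions $k+1,\dots,n$) coloured with one of the ``high'' colours in $[\lambda+1,\nu]$. These $j$ positions can be chosen among the $n-k$ eligible positions in $\binom{n-k}{j}$ ways, each gets one of $\nu-\lambda$ colours, and what remains — the permutation structure together with the low-coloured fixed points — is precisely an element of $D^k_{n-j}(\lambda)$, once we reduce to the $n-j$ remaining values; this accounts for the factor $d^k_{n-j}(\lambda)$. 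Summing over $j$ gives $d^k_n(\nu)=\sum_j\binom{n-k}{j}d^k_{n-j}(\lambda)(\nu-\lambda)^j$. The case $\nu<\lambda$ is the inclusion–exclusion dual: prescribe $j$ of the $n-k$ trailing positions to be fixed points forced to draw their colour only from $[\nu+1,\lambda]$, count by $\binom{n-k}{j}d^k_{n-j}(\lambda)(\lambda-\nu)^j$, and apply inclusion–exclusion to recover $d^k_n(\nu)$.

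The only point requiring a little care — and the main obstacle — is checking that ``removing $j$ fixed points from the trailing segment'' really lands in $D^k_{n-j}(\lambda)$ rather than in some larger class. Here one uses that $\SymGroup{(k,1,\dots,1)}$ is closed under the operation $\psi_F$ of deleting a set $F$ of fixed points lying in positions $k+1,\dots,n$: deleting such a fixed point shifts the remaining entries down but preserves the initial descending block of length $k$ (since those first $k$ entries are all $>k\ge$ the deleted value when it is a high-coloured fixed point in the $\nu>\lambda$ case, or at worst the relative order of the first $k$ positions is untouched), and it preserves the ``no-fixed-point-on-the-last-positions'' constraint modulo colour. Symmetrically, $\phi_F$ reinserts fixed points into the trailing segment without creating a descent in the initial block, which is why the reconstruction is a genuine bijection. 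Once this closure property is noted, the three counting arguments are line-by-line parallel to Lemma \ref{lm:lamfakid}, so I would simply remark that ``the proof is completely analogous,'' as the paper does, after spelling out the binomial $\binom{n-k}{j}$ (rather than $\binom{n}{j}$) and the role of the initial block.
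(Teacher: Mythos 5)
Your proposal is correct and follows exactly the route the paper intends: the paper simply declares the proof ``completely analogous'' to Lemma~\ref{lm:lamfakid}, and you carry out that analogy, correctly replacing $\binom{n}{j}$ by $\binom{n-k}{j}$ since only the $n-k$ trailing positions carry colourable fixed points, and verifying that deleting/reinserting such fixed points via $\psi_F$ and $\phi_F$ stays within $\SymGroup{(k,1,\dots,1)}$. No gaps; if anything, you supply the closure detail the paper leaves implicit.
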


\section{Open problems}

While many of our results have been shown bijectively, there are a few
that still seek their combinatorial explanation. The most obvious are
these. 

\begin{problem}
Give a combinatorial proof, using the principle of
inclusion-exclusion, of Theorem \ref{thm:lamfak}.
\end{problem}

\begin{problem}
Give a bijection $f: \Sn \rightarrow \Sn$ such that $\pi \in D(a_1,
a_2, \ldots a_k) \Rightarrow f(\pi) \in D(a_1 + 1, a_2 - 1, a_3,
\ldots, a_k)$ whenever $a_1 \geq a_2$ and $\vc{a} \neq (2m, 1)$.
\end{problem}

We would also like the rearrangement of blocks in $D(\vc{a})$ to get
a simple description.

\begin{problem}
For any $(a_1, \ldots, a_k)$ and any $\sigma \in \SymGroup{k}$,
give a simple bijection $f: D(a_1, \ldots, a_k) \rightarrow D(a_{\sigma_1},
\ldots, a_{\sigma_k})$.
\end{problem}

Instead of specifying descents, we could specify spots where the permutation must not descend. This would add some new features to the problem, as ascending blocks can contain several fixed points, whereas descending blocks can only contain one.

\begin{problem}
Given a composition $\vc a$, find the number of derangements that ascend within the blocks.
\end{problem}

\bibliographystyle{plain}
\bibliography{DerDes}

\addcontentsline{toc}{chapter}{\numberline{}Bibliography}
\end{document}